\newcolumntype{d}[1]{D{.}{.}{#1}}
\newlength\celldim
\newcolumntype{C}{@{}>{\centering\arraybackslash}p{\celldim} @{}}
\newcommand{\R}{\mathbb{R}}
\newcommand{\Hi}{\mathcal{H}}
\newcommand{\Id}{\operatorname{Id}}
\newcommand{\vect}{\operatorname{vec}}
\DeclareMathOperator{\argmin}{argmin}
\newtheorem{theorem}{Theorem}[section]
\newtheorem{proposition}{Proposition}[section]
\newtheorem{remark}{Remark}[section]
\renewenvironment*{proof}[1][Proof]{\noindent \textbf{#1.} }{\ \rule{0.5em}{0.5em}}
\numberwithin{equation}{section}
\newcounter{row}
\newcounter{col}
\newcommand\setrow[9]{
  \setcounter{col}{1}
  \foreach \n in {#1, #2, #3, #4, #5, #6, #7, #8, #9} {
    \edef\x{\value{col} - 0.5}
    \edef\y{9.5 - \value{row}}
    \node[anchor=center] at (\x, \y) {\n};
    \stepcounter{col}
  }
  \stepcounter{row}
}
\begin{document}

\title{%
\vspace*{-2cm}%
Finding magic squares with the Douglas--Rachford algorithm
\vspace*{-0.2cm}%
}
\author{Francisco J. Arag\'on Artacho\thanks{%
Corresponding Author.} \\
Departamento de Matem\'aticas \\
Universidad de Alicante\\ \href{mailto:francisco.aragon@ua.es}{\nolinkurl{francisco.aragon@ua.es}}
\and Paula Segura Mart\'inez \\
Departamento de Estad\'istica e Investigaci\'on Operativa\\
Universidad de Valencia\\
\href{mailto:paulamaths@gmail.com}{\nolinkurl{paulamaths@gmail.com}}}
\date{%
\vspace*{-0.8cm}%
} \maketitle

\begin{abstract}

In this expository paper, we show how to use the Douglas--Rachford algorithm as a successful heuristic for finding magic squares. The Douglas--Rachford algorithm is an iterative projection method for solving feasibility problems. Although its convergence is only guaranteed in the convex setting, the algorithm has been successfully applied to a number of similar nonconvex problems, such as solving Sudoku puzzles. We present two formulations of the nonconvex feasibility problem of finding magic squares, which are inspired by those of Sudoku, and test the Douglas--Rachford algorithm on them.
\medskip

\noindent\textbf{Keywords:} Douglas--Rachford algorithm, magic square, feasibility problem, nonconvex
constraints

\noindent\textbf{AMS Subject classifications:} 47J25, 47N10, 05B15, 97A20

\end{abstract}

\section{Introduction}\label{sec:1}

A \emph{magic square} of order $n$ is an $n \times n$ matrix whose elements are distinct positive integers $1,2,\ldots,n^2$ arranged in such a way that the sum of the  $n$ numbers in any horizontal, vertical, or main diagonal line is always the same number, known as the \emph{magic constant}. Clearly, the magic constant is equal to $n(n^2+1)/2$, as it is the result of dividing by $n$ the sum of $1, 2, \ldots, n^2$. Hence, for magic squares of order $3$, $4$ and $5$, their respective magic constants are $15$, $34$ and $65$.
In \cref{fig:luoshu} we show the unique magic square of order $3$, except for rotations and reflections.

\begin{figure}[ht!]\centering
\begin{minipage}{.45\textwidth}
\begin{center}
\begin{TAB}(e,0.75cm,0.75cm){|c|c|c|}{|c|c|c|}
4 & 9 & 2\\
3 & 5 & 7\\
8 & 1 & 6\\
\end{TAB}
\end{center}
\end{minipage}
\begin{minipage}{.45\textwidth}
\includegraphics[width=0.6\textwidth]{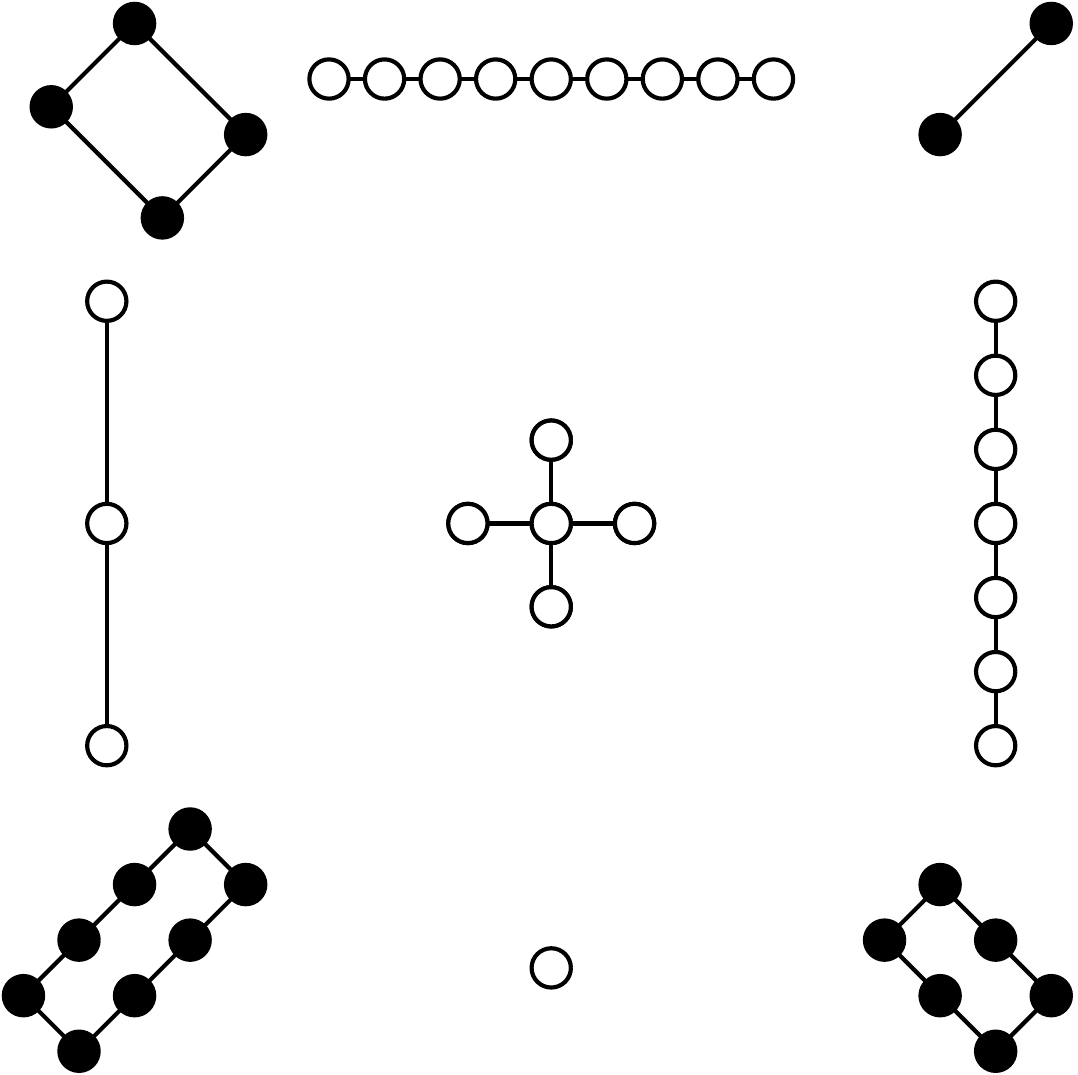}
\end{minipage}\vspace{5pt}
\caption{Modern and traditional representations of the \emph{luoshu} magic square\label{fig:luoshu}}
\end{figure}

\newlength{\lenquote}
\setlength{\lenquote}{.77\textwidth}

Magic squares have a long history and were frequently associated with mystical and supernatural properties. It is believed that the magic square of order~$3$ shown in \cref{fig:luoshu}, commonly referred by its Chinese name \emph{luoshu}, was known to Chinese mathematicians long before 500 BCE. According to~\cite{S08}, the very first textual reference to the \emph{luoshu} appears to be in the writings of Zhuang Zi (369-286  B.C.E.), one of the founders of Daoism. The name \emph{luoshu} means ``Luo River writing'', and refers to the following legend:
\begin{center}
\begin{minipage}{\lenquote}\emph{There was a huge flood in the ancient China. While Sage King Yu stood on the banks of the Luo River trying to channel the water out to the sea, a turtle with a curious pattern of dots arranged on its shell emerged from the river. This pattern was the magic square of order 3. Thereafter people were able to use this pattern in a certain way to control the river and protect themselves from floods.}
\end{minipage}
\end{center}

It is possible to construct magic squares of any order, except for $n=2$: if the following grid was a magic square,
\begin{center}
\begin{TAB}(e,0.6cm,0.6cm){|c|c|}{|c|c|}
a & b \\
c & d
\end{TAB}
\end{center}
then one should have $a+b=a+c$, which implies $b=c$.

Probably, the most well-known magic square of order $4$ is the one immortalized by the German artist Albrecht D\"urer in his engraving Melencolia I, see the detail in \cref{fig:Melencolia}. Observe how the date of the engraving, 1514, is shown in the two middle cells of the bottom row. This magic square has the additional property that the sums in any of the four quadrants, as well as the sum of the middle four numbers, are all $34$ (which is called a \emph{gnomon magic square}).

\begin{figure}[h]
\centering
\includegraphics[width=.85\textwidth]{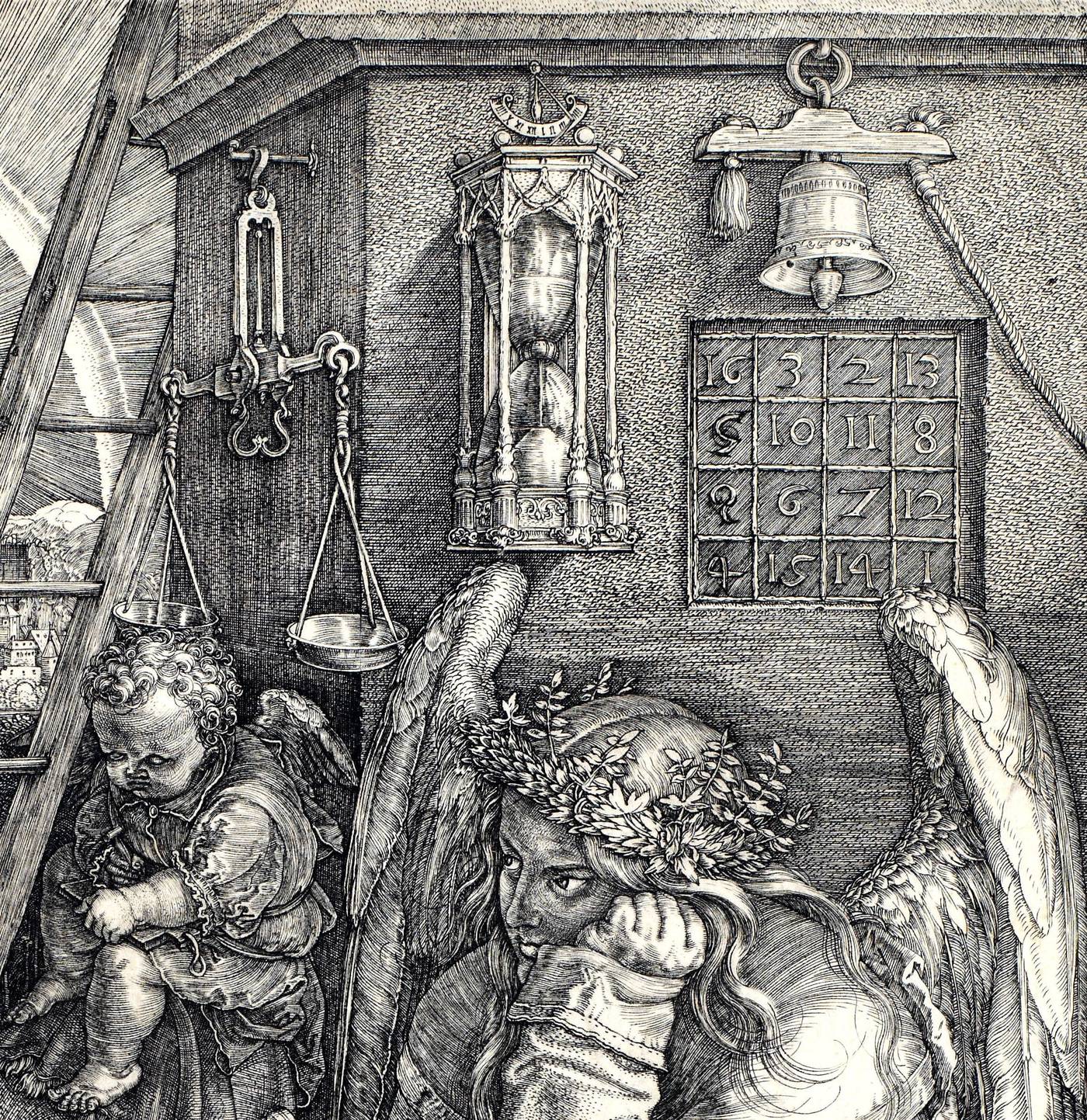}\vspace{5pt}
\caption{Albrecht D\"urer, Melencolia I (detail)}\label{fig:Melencolia}
\end{figure}

D\"urer became very interested in mathematics and devoted his life to the study of linear perspective and proportion. He claimed:
\begin{center}
\begin{minipage}{\lenquote}
\emph{The new art must be based upon science -- in particular, upon mathematics, as the most exact, logical, and graphically constructive of the sciences.}
\end{minipage}
\end{center}

D\"urer's magic square previously appeared in the manuscript \emph{De viribus quantitatis} by the Franciscan friar Luca Pacioli, which includes a short chapter on magic squares dating from about 1501--1503. In that chapter, Pacioli includes one example each of magic square of orders $3$ to $9$, and the one of order $4$ is identical to the one in Melancolia~I. Note that the chances for coincidence are very low, as there are $880$ magic squares of order $4$ (multiplied by their rotations and reflections). It is very plausible that D\"urer learnt this magic square from Pacioli, during his second visit to Italy between 1505--1507. For more details, see~\cite{S08,T03}.

There are many ways of constructing magic squares. In this work, based on~\cite{TFG}, we present a novel approach by formulating their construction as a nonconvex feasibility problem which can be tackled by the Douglas--Rachford algorithm. Of course, this only an expository problem, as there are much more efficient strategies for constructing magic squares. We review the main characteristics of this algorithm in \cref{sec:2}. Two different formulations are presented in \cref{sec:3}. The numerical experiments reported in \cref{sec:num_exp} demonstrate that the Douglas--Rachford algorithm can be used as a heuristic for finding them. We finish with some conclusions in~\cref{sec:5}.

\section{Projection algorithms}\label{sec:2}
Given a finite family $C_1,C_2,\ldots,C_N$ of subsets of a Hilbert space $\mathcal{H}$, a \emph{feasibility problem} consists in finding a point in their intersection; that is,
\begin{equation}\label{eq:FP}
\text{Find }x\in\bigcap_{i=1}^N C_i.
\end{equation}
Thanks to the generality of~\eqref{eq:FP}, plenty of problems can be modeled as feasibility problems. In many cases, finding a point in the intersection of the sets is a challenging task, but computing the projection of points into each of the sets is easy. In such scenarios, \emph{projection methods} become very useful. These are algorithms that use the projection of points onto the sets to define the iterates. Recall that the \emph{projection} of a point $x$ onto a closed set $C\subset\Hi$ is defined by
\begin{equation*}
P_C(x):= \left\{ z \in C : \| x-z \| = \inf_{c \in C} \| x-c \| \right\},
\end{equation*}
where $\|\cdot\|$ denotes the norm induced by the inner product $\langle\cdot,\cdot\rangle$ of $\Hi$. Observe that, in general, the projection operator $P_C:\Hi\rightrightarrows C$ is a set-valued mapping, see \cref{fig:projection}. Nonetheless, if the set $C$ is nonempty, closed and convex, the projection is uniquely determined as follows (see, e.g., \cite[Theorem~3.16]{BC17}):
\begin{equation}\label{eq:projection}
p=P_C(x)\iff p\in C \text{ and }\langle c-p,x-p\rangle\leq 0,\, \forall c\in C.
\end{equation}

\begin{figure}[ht!]
\centering
\includegraphics[width=0.45\textwidth]{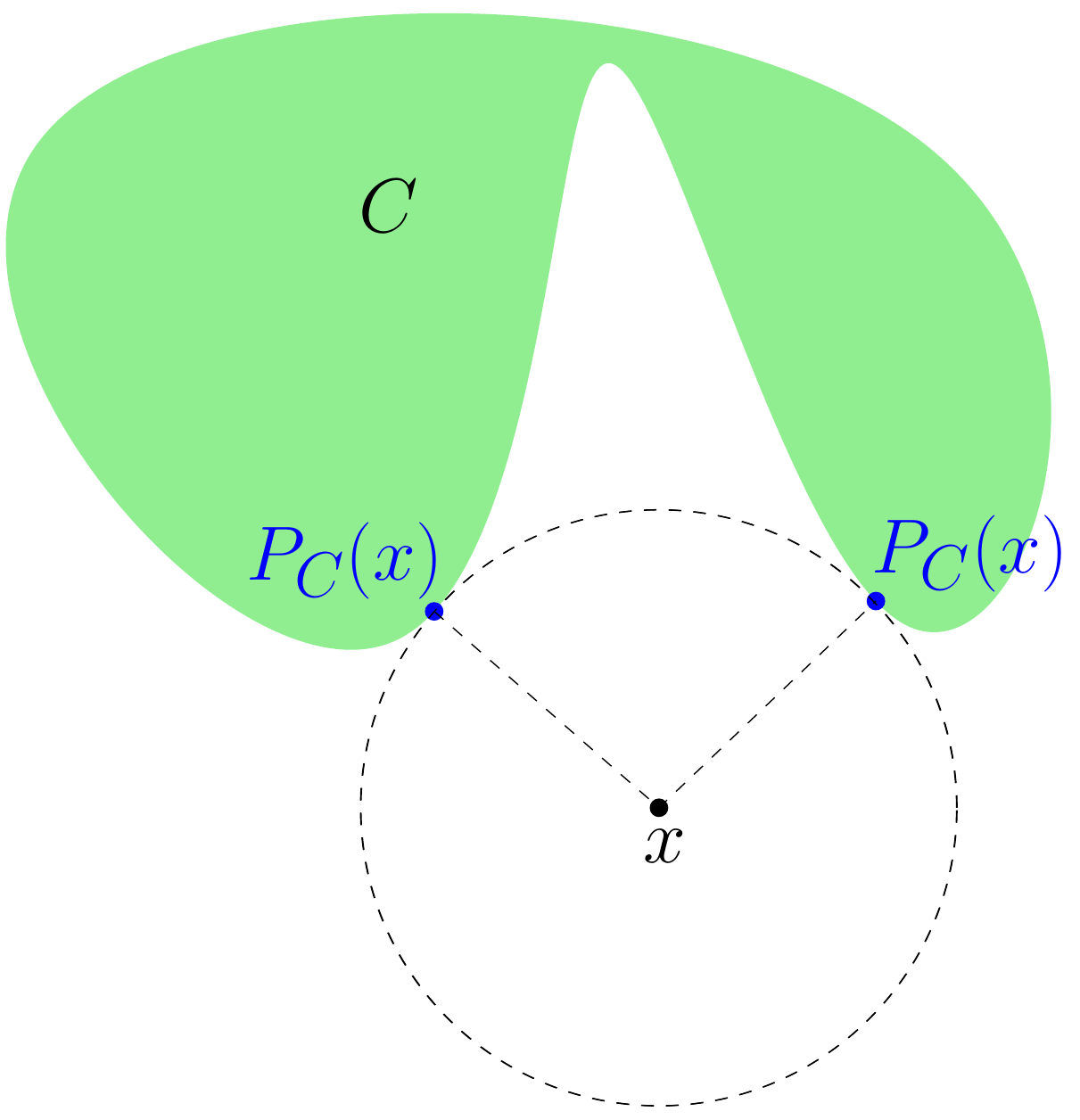}\vspace{3pt}
\caption{Projection of a point $x$ onto a nonconvex set $C$}\label{fig:projection}
\end{figure}

The most intuitive and well-known projection method is the von Neumann's alternating projection algorithm, which iterates by cyclically projecting onto each of the sets, see \cref{fig:AP}.
\begin{figure}[ht!]
\centering
\includegraphics[width=0.5\textwidth]{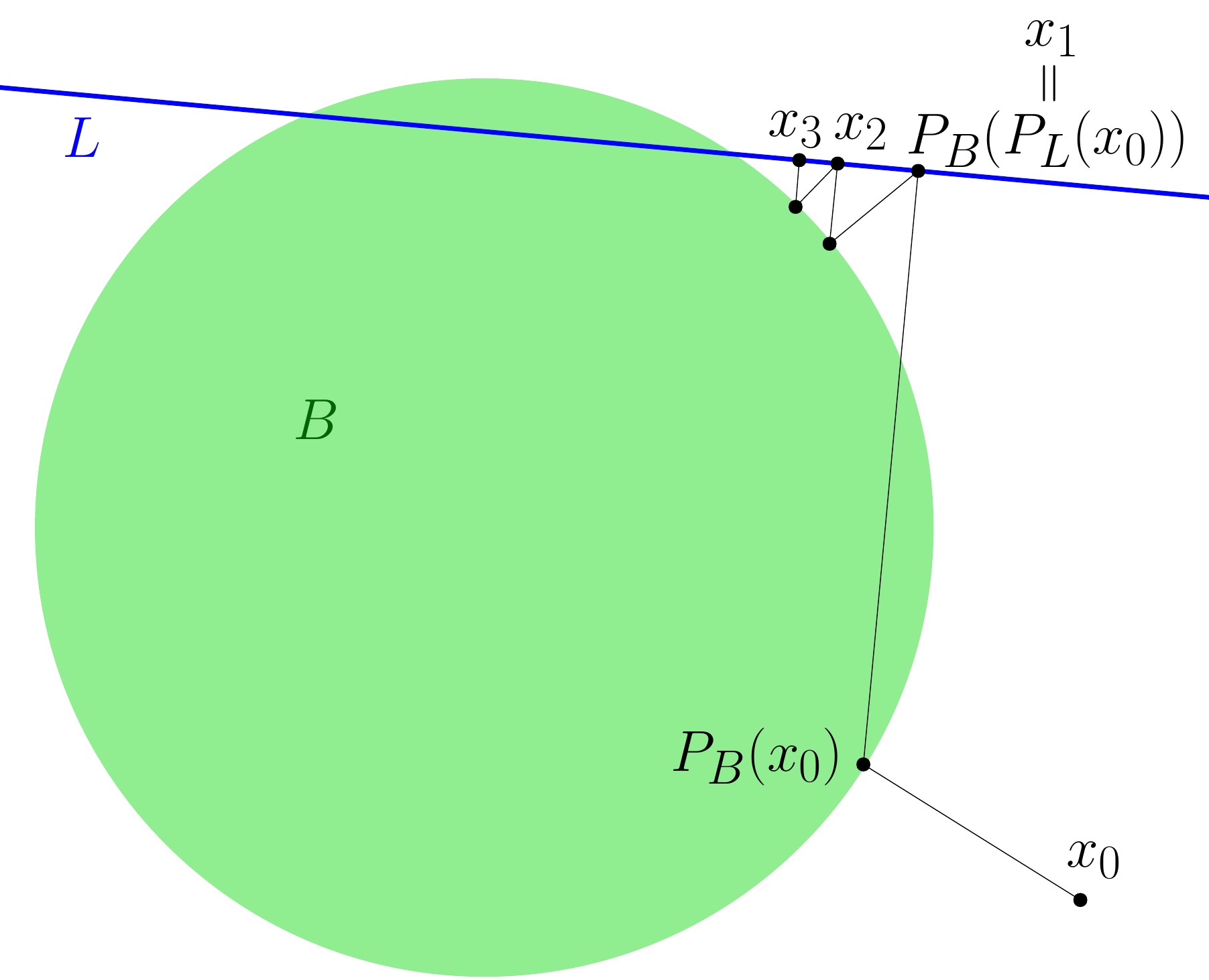}\vspace{5pt}
\caption{Alternating projection method applied to a ball and a line in $\R^2$}\label{fig:AP}
\end{figure}

In this work, we will focus on another popular projection method: the \emph{Douglas--Rachford algorithm}. Although the scheme has its origins in the work of J.~Douglas
and H.H.~Rachford~\cite{DR56}, where it was proposed for solving a system of linear equations arising in heat conduction problems, it was P.L. Lions and B.~Mercier~\cite{LM79} who really deserve the credit for the algorithm. In their work, not only they showed how to successfully generalize the algorithm for solving convex feasibility problems, but they actually provided a splitting algorithm for finding a zero in the sum of two maximally monotone operators.

The Douglas--Rachford algorithm employs the reflection operators when is applied for solving feasibility problems. Recall that the \emph{reflection} of a point $x \in \mathcal{H}$ onto a closed set $C$ is defined as $R_C(x):=2P_C(x)-x$. The iterates of the algorithm for solving feasibility problems involving two sets are obtained  by computing an average between the current point and two consecutive reflections onto the sets, see \cref{fig:DR}. For this reason, the scheme is also referred as the \emph{averaged alternating reflections method}.
\begin{figure}[ht!]
\centering
\includegraphics[width=0.6\textwidth]{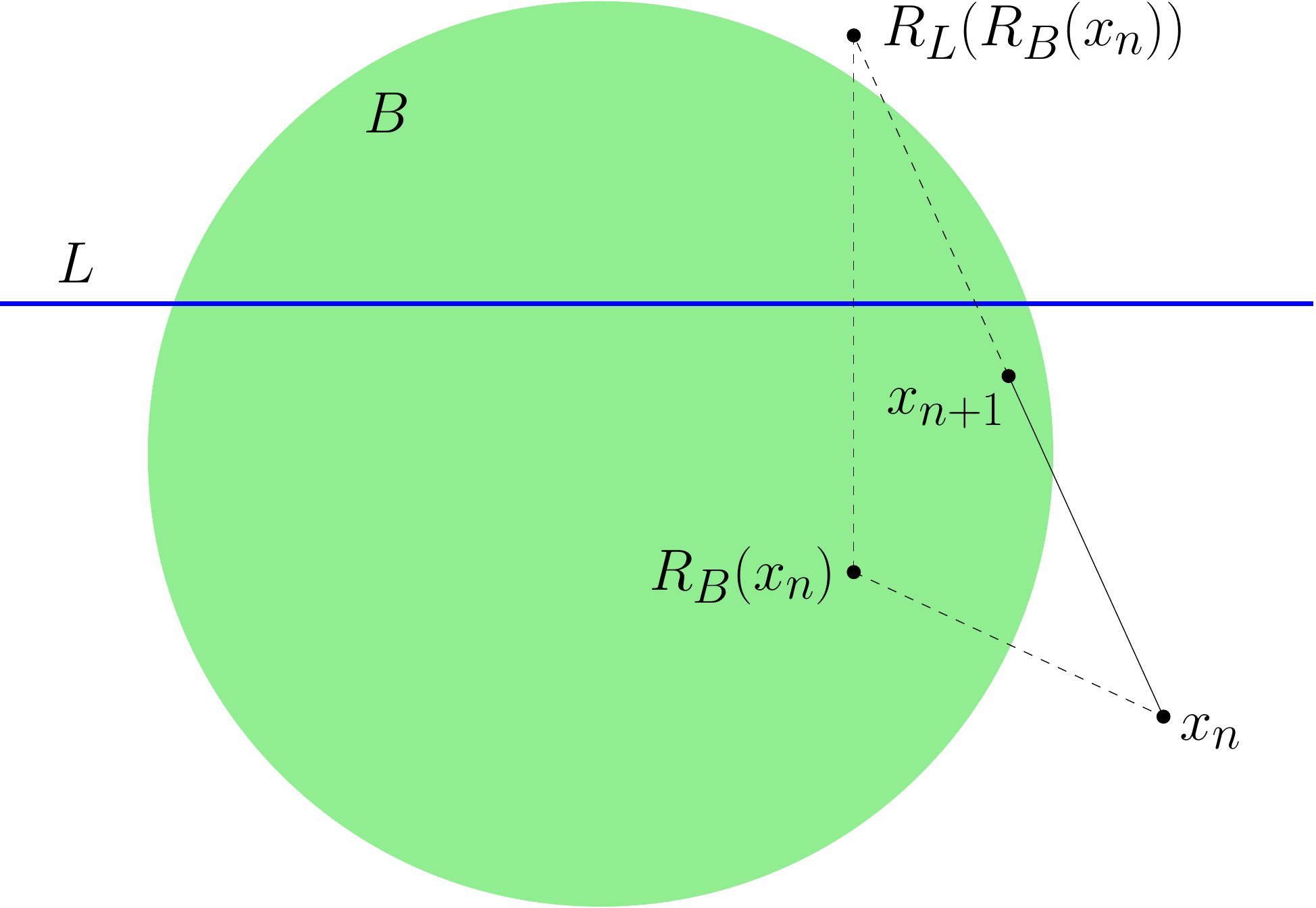}\vspace{5pt}
\caption{Douglas--Rachford method applied to a ball and a line in $\R^2$}\label{fig:DR}
\end{figure}

The Douglas--Rachford algorithm has recently gained great attention, in part thanks to its good performance as a heuristic in nonconvex settings (where the convergence of the algorithm is not guaranteed). In the remainder of this section we will review its main properties in the convex setting, we will show how the algorithm can be applied to any number of sets thanks to a product space reformulation of the feasibility problem, and we will review an application in the nonconvex setting for solving Sudoku puzzles.

\subsection{The Douglas--Rachford algorithm for convex sets}

When the Douglas--Rachford algorithm is applied to two closed and convex sets $A,B\subset \Hi$, it can be viewed as a fixed point iterative method defined by the \emph{Douglas--Rachford operator} $T_{A,B}:=\frac{1}{2}(\Id+R_B R_A)$, where $\Id$ denotes the identity mapping:
\begin{equation*} \label{eq1}
x_{n+1} = T_{A,B}(x_n)\quad\text{for }n=0,1,2,\ldots.
\end{equation*}
Note that the iterates are uniquely determined because the reflection operators onto convex sets are single-valued, as the projection operators are. Further, observe that $\overline{x}\in\Hi$ is a fixed point of the operator $T_{A,B}$ if and only if
\[
\overline{x} = 2P_B(2P_A(\overline{x})-\overline{x})-(2P_A(\overline{x})+\overline{x}) \iff P_A(\overline{x})=P_B(2P_A(\overline{x})-\overline{x}),
\]
which implies that $P_A(\overline{x}) \in A \cap B$. Therefore, the projection onto the set $A$ of any fixed point of the Douglas--Rachford operator solves the feasibility problem.

Using a well-known theorem of Opial~\cite[Theorem~1]{O67}, the sequence generated by the algorithm can be proved to be weakly convergent to a fixed point of the Douglas--Rachford operator, whenever $A\cap B\neq\emptyset$. This is a consequence of the \emph{firm nonexpansivity} of the projection operator,
\[
\| P_A (x) - P_A (y) \|^2 + \|(\Id-P_A)(x) - (\Id-P_A)(y)\|^2 \leq \| x-y \|^2, \quad\forall x,y \in \mathcal{H},
\]
which implies that the reflection operator is \emph{nonexpansive} (or Lipschitz continuous with constant $1$), i.e.,
$$\| R_A(x) - R_A(y) \| \leq \| x-y \| , \quad\forall x,y \in \mathcal{H},$$
and therefore, $T_{A,B}$ is firmly nonexpansive \cite[Theorem~12.1]{S2}.

The next result summarizes the main properties of the Douglas--Rachford algorithm in the convex setting.

\begin{theorem}\label{th:DR}
Let $A,B \in \mathcal{H}$ be closed and convex sets. Given any $x_0 \in \mathcal{H}$, define
$x_{n+1}=T_{A,B}(x_n)$, for every $n\geq 0$. Then, the following holds:
\begin{itemize}
\item [(i)] If $A \cap B \neq \emptyset$, then $\{x_n\}$ is weakly convergent to a point $x^\star$ and $\{P_A(x_n)\}$ is weakly convergent to $P_A(x^\star) \in A \cap B$,
\item [(ii)] If $A \cap B = \emptyset$, then $\| x_n \| \rightarrow \infty$.
\end{itemize}
\end{theorem}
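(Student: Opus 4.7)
My plan is to derive part (i) from Opial's weak-convergence lemma, leveraging the firm nonexpansivity of $T_{A,B}$ established in the excerpt, and to handle part (ii) by showing that any bounded orbit would produce a point in $A\cap B$.

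First I would check that $\operatorname{Fix}(T_{A,B})\neq\emptyset$ whenever $A\cap B\neq\emptyset$: every $z\in A\cap B$ satisfies $P_A(z)=z$ and $R_A(z)=z$, hence $R_BR_A(z)=z$ and $T_{A,B}(z)=z$. Applying firm nonexpansivity with $y=z\in\operatorname{Fix}(T_{A,B})$ yields, for all $n$,
\[
\|x_{n+1}-z\|^2+\|x_{n+1}-x_n\|^2\le\|x_n-z\|^2,
\]
which gives simultaneously Fej\'er monotonicity of $\{x_n\}$ with respect to $\operatorname{Fix}(T_{A,B})$ and, by telescoping, asymptotic regularity $x_{n+1}-x_n\to 0$. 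In particular $\{x_n\}$ is bounded and therefore admits weak cluster points.

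To finish (i) I would invoke Opial's lemma. Any weak cluster point $\overline{x}$ of $\{x_n\}$ must lie in $\operatorname{Fix}(T_{A,B})$: this is the demiclosedness principle for the firmly nonexpansive map $T_{A,B}$, which follows from asymptotic regularity together with the displayed inequality. Combined with Fej\'er monotonicity, Opial's lemma then forces uniqueness of the weak cluster point, so $x_n\rightharpoonup x^\star\in\operatorname{Fix}(T_{A,B})$, and the computation displayed just before the theorem gives $P_A(x^\star)\in A\cap B$. For the weak convergence of $\{P_A(x_n)\}$, I would use firm nonexpansivity of $P_A$,
\[
\|P_A(x_n)-P_A(x^\star)\|^2+\|(x_n-P_A(x_n))-(x^\star-P_A(x^\star))\|^2\le\|x_n-x^\star\|^2,
\]
to obtain boundedness of $\{P_A(x_n)\}$, identify every weak cluster point as a point of $A\cap B$ via the demiclosedness of $\Id-P_A$, and then apply Opial's lemma once more to the Fej\'er-monotone sequence $\{\|P_A(x_n)-z\|\}_n$ for $z\in A\cap B$.

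Part (ii) I would prove by contradiction: if $\{x_n\}$ were bounded, a weak cluster point $\overline{x}$ would exist and, by essentially the same demiclosedness argument (which only uses the firm nonexpansivity of $T_{A,B}$, not the a priori existence of fixed points), it would satisfy $T_{A,B}(\overline{x})=\overline{x}$, producing $P_A(\overline{x})\in A\cap B$ and contradicting the hypothesis. To upgrade unboundedness to $\|x_n\|\to\infty$, I would rely on the infimal displacement vector $v:=P_{\overline{\operatorname{ran}(\Id-T_{A,B})}}(0)$, which is nonzero precisely when $A\cap B=\emptyset$ and for which $(x_n-x_0)/n\to -v$, forcing $\|x_n\|\to\infty$. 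I expect the main obstacle to be the second convergence in~(i): since $P_A$ is not weakly continuous in general, one cannot simply apply $P_A$ to $x_n\rightharpoonup x^\star$, and a separate Opial-type argument combining asymptotic regularity with the demiclosedness of $\Id-P_A$ is required.
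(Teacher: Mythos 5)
Note first that the paper does not prove this theorem at all: it delegates part (i) to \cite{BCL04} and \cite{S11} and part (ii) to \cite{BBR78}, so your sketch has to stand on its own. Your treatment of the governing sequence in (i) is the standard and correct Opial argument (Fej\'er monotonicity with respect to $\operatorname{Fix}T_{A,B}$, asymptotic regularity by telescoping, demiclosedness of $\Id-T_{A,B}$ at $0$). The first genuine gap is in the shadow sequence: you propose to apply Opial's lemma ``to the Fej\'er-monotone sequence $\{\|P_A(x_n)-z\|\}$'', but you have not shown, and it is not known to be true, that this sequence is nonincreasing. Firm nonexpansivity of $P_A$ only gives $\|P_A(x_n)-z\|\leq\|x_n-z\|$, which bounds the shadow distances by the (monotone) distances of the governing sequence but says nothing about their own monotonicity. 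This missing limit $\lim_n\|P_A(x_n)-z\|$ is exactly why the weak convergence of the shadow sequence is a 2011 theorem of Svaiter rather than a corollary of \cite{LM79}; Svaiter identifies the weak limit by a different mechanism (monotonicity of the normal-cone operator and the weak--strong closedness of its graph), not by Fej\'er monotonicity. Your identification of the weak cluster points of $\{P_A(x_n)\}$ as points of $A\cap B$ is fine --- it follows from $P_B(R_A(x_n))-P_A(x_n)\to 0$ and the weak closedness of the two convex sets --- but without the limit-existence step Opial's lemma cannot be invoked, so as written this part of the proof does not close.

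The second gap is in (ii). The claim that the infimal displacement vector $v=P_{\overline{\operatorname{ran}}(\Id-T_{A,B})}(0)$ ``is nonzero precisely when $A\cap B=\emptyset$'' is false: one has $\|v\|=\inf_{a\in A,\,b\in B}\|a-b\|$, so for disjoint sets with zero gap (e.g.\ the epigraph of $x\mapsto e^{x}$ and the lower half-plane in $\R^2$) $v=0$ while $A\cap B=\emptyset$, and the route ``$\|x_n\|/n\to\|v\|>0$'' covers only the positive-gap case. Relatedly, your contradiction argument invokes demiclosedness, which requires $(\Id-T_{A,B})(x_{n_k})\to 0$ along the bounded subsequence; your earlier derivation of asymptotic regularity used a fixed point $z$, which is precisely what is unavailable here. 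Both defects are repaired by the result you allude to but do not fully exploit: for a firmly nonexpansive map the displacement $x_n-x_{n+1}$ converges strongly to $v$. If $v\neq 0$ this forces $\|x_n\|\geq n\|v\|-o(n)\to\infty$; if $v=0$ it yields asymptotic regularity unconditionally, so a bounded subsequence would produce a weak cluster point that is a fixed point and hence a point of $A\cap B$, a contradiction. That dichotomy is essentially the content of \cite[Corollary~2.2]{BBR78}, which is what the paper cites for (ii).
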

\begin{proof}
 (i) For the first part, see~\cite[Theorem~3.13 and Corollary~3.9]{BCL04}. Note that the weak convergence of the \emph{shadow sequence} $\{P_A(x_n)\}$ cannot be derived from the weak convergence of $\{x_n\}$, since $P_A$ may not be weakly continuous (see~\cite[Example 4.20]{BC17}). The weak convergence of the shadow sequence was proved by Svaiter in~\cite[Theorem~1]{S11}. (ii) See~\cite[Corollary~2.2]{BBR78}.
\end{proof}

In contrast with the alternating projection algorithm, whose scheme can be easily generalized to more than two sets by cyclically projecting onto the sets, the same cannot be done with the Douglas--Rachford algorithm (see~\cite[Example~2.1]{ABT14} for a simple example involving three lines in $\R^2$). Fortunately, the product space formulation, due to Pierra~\cite{P84}, permits to reformulate any feasibility problem as another one involving only two sets. This is the subject of the next subsection.

\subsection{Product space formulation}\label{sec:product_space}

Consider the feasibility problem~\eqref{eq:FP} given by $N$ sets. Let
$$\mathcal{H}^N:=\mathcal{H} \times \overset{(N)}{\cdots} \times \mathcal{H} = \lbrace \mathbf{x} = (x_1, \ldots, x_N ) : x_i \in \mathcal{H}, i = 1, \ldots, N \rbrace.$$
It can be easily to checked that the space $\Hi^N$ endowed with the inner product $\langle \mathbf{x},\mathbf{y} \rangle := \sum_{i=1}^N \langle x_i, y_i \rangle$ is a Hilbert space. Let us consider two sets $C, D \subset \mathcal{H}^N$ defined by
$$C:=C_1\times C_2,\times\cdots\times C_N\quad\text{and}\quad D:=\left\lbrace (x,x,\ldots,x) \in \mathcal{H}^N : x \in \mathcal{H} \right\rbrace.$$
The set $D$, which is sometimes called the \textit{diagonal}, is always a closed subspace. Observe that the feasibility problem~\eqref{eq:FP} is equivalent to the one involving the sets $C$ and $D$, since
$$x \in \bigcap_{i=1}^N C_i \Longleftrightarrow (x,x,\ldots,x) \in C \cap D. $$

Not only~\eqref{eq:FP} can be recast as a feasibility problem involving two sets, but also the projection onto each of these sets has a closed form in terms of $P_{C_1}, P_{C_2}, \ldots, P_{C_N}$, as shown next. For completeness, we include a proof based on~\cite[Proposición 3.1]{ABT14b}. This result extends Pierra's Lemma~1.1 in~\cite{P84} to possibly nonconvex sets.

\begin{proposition}
For any $\mathbf{x}=(x_1,x_2,\ldots,x_N ) \in \mathcal{H}^N$, one has
$$P_D (\mathbf{x}) = \left( \frac{1}{N} \sum_{i=1}^N x_i,\ldots,\frac{1}{N} \sum_{i=1}^N x_i \right), $$
and if $P_{C_1}(x_1),P_{C_2}(x_2)\ldots,P_{C_N}(x_N)$ are nonempty,
$$P_C (\mathbf{x}) = P_{C_1}(x_1)\times P_{C_2}(x_2)\times\cdots\times P_{C_N}(x_N).$$
\end{proposition}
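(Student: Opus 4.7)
The statement has two assertions, and the plan is to treat them separately, exploiting the fact that the inner product on $\Hi^N$ is defined componentwise so that $\|\mathbf{x}-\mathbf{y}\|^2=\sum_{i=1}^N\|x_i-y_i\|^2$. Both formulas will follow from this separability together with the definition of the projection.

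For the formula for $P_D$, I would first observe that $D$ is a closed linear subspace, hence nonempty, closed and convex, so $P_D$ is single-valued and characterized by the variational inequality~\eqref{eq:projection}. Given $\mathbf{x}=(x_1,\ldots,x_N)\in\Hi^N$, set $\bar{x}:=\frac{1}{N}\sum_{i=1}^N x_i$ and $\mathbf{p}:=(\bar{x},\ldots,\bar{x})\in D$. To verify $\mathbf{p}=P_D(\mathbf{x})$, I would check that for every $\mathbf{d}=(d,\ldots,d)\in D$,
\[
\langle\mathbf{d}-\mathbf{p},\mathbf{x}-\mathbf{p}\rangle=\sum_{i=1}^N\langle d-\bar{x},x_i-\bar{x}\rangle=\Bigl\langle d-\bar{x},\,\textstyle\sum_{i=1}^N x_i-N\bar{x}\Bigr\rangle=0,
\]
by the choice of $\bar{x}$. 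An equally quick alternative is a direct calculus argument: the function $p\mapsto\sum_{i=1}^N\|x_i-p\|^2$ is strongly convex with unique minimizer $\bar{x}$, obtained by setting the derivative to zero.

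For the formula for $P_C$, I would argue without appealing to convexity, since the $C_i$ may be nonconvex and $P_{C_i}$ set-valued. Given $\mathbf{x}=(x_1,\ldots,x_N)$, the key identity is
\[
\inf_{\mathbf{c}\in C}\|\mathbf{x}-\mathbf{c}\|^2=\inf_{(c_1,\ldots,c_N)\in C_1\times\cdots\times C_N}\sum_{i=1}^N\|x_i-c_i\|^2=\sum_{i=1}^N\inf_{c_i\in C_i}\|x_i-c_i\|^2,
\]
where the last equality holds because each $\|x_i-c_i\|^2$ depends only on $c_i$. Hence $\mathbf{p}=(p_1,\ldots,p_N)\in C$ achieves the infimum if and only if each $p_i$ achieves $\inf_{c_i\in C_i}\|x_i-c_i\|^2$, i.e., $p_i\in P_{C_i}(x_i)$ for every $i$. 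The hypothesis that each $P_{C_i}(x_i)$ is nonempty guarantees that the decomposed infima are attained, so $P_C(\mathbf{x})=P_{C_1}(x_1)\times\cdots\times P_{C_N}(x_N)$.

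The only subtle point is the second part, where one must be careful not to inadvertently use convexity or single-valuedness; the argument above handles this by working directly with the definition of $P_C$ as the set of minimizers rather than with the variational characterization~\eqref{eq:projection}. The first part is standard and routine.
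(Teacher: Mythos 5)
Your proof is correct and follows essentially the same route as the paper's: the diagonal formula via the variational characterization~\eqref{eq:projection}, and the product formula via the separability $\|\mathbf{x}-\mathbf{c}\|^2=\sum_{i=1}^N\|x_i-c_i\|^2$ applied directly to the definition of the projection as a set of minimizers. The only cosmetic difference is in the reverse inclusion for $P_C$: the paper perturbs one coordinate of a minimizer at a time, whereas you note that a sum of terms, each bounded below by its own infimum, can equal the sum of the infima only if equality holds termwise --- two equivalent uses of the same separability.
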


\begin{proof}
Let $\mathbf{p}=(p,\ldots,p) \in D$ be the projection of $\mathbf{x}$ onto the set $D$. For any $d \in \mathcal{H}$, one has $\mathbf{d}:=(d,\ldots,d) \in D$. Then, using the characterization~\eqref{eq:projection} applied to $\mathbf{p}+\mathbf{d}$ and $\mathbf{p}-\mathbf{d}$, we deduce
$$0=\langle \mathbf{d},\mathbf{x}-\mathbf{p}\rangle = \sum_{i=1}^N \langle d,x_i-p \rangle = \left\langle d,\sum_{i=1}^N x_i - Np\right\rangle.$$
As the latter holds for any $d\in\Hi$, it must be $p=\frac{1}{N} \sum_{i=1}^N x_i$, as claimed.

To prove the formula for the projection onto the set $C$, pick first any point $\mathbf{p}=(p_1,\ldots,p_N) \in P_{C_1}(x_1)\times\cdots\times P_{C_N}(x_N) \subseteq C$. Then, for any $\mathbf{c}=(c_1,\ldots,c_N) \in C$, one has
$$\| \mathbf{x}-\mathbf{c} \|^2 = \sum_{i=1}^N \| x_i - c_i \|^2 \geq \sum_{i=1}^N \| x_i - p_i \|^2 = \| \mathbf{x}-\mathbf{p} \|^2.$$
Therefore, $P_C(\mathbf{x}) \supseteq \prod_{i=1}^N P_{C_i}(x_i)$.

Conversely, let $\mathbf{p}=(p_1,\ldots,p_N)\in P_C(\mathbf{x})$. Pick any $j\in\{1,\ldots,N\}$ and choose any $c_j\in C_j$. Let $q_i:=p_i$ if $i \neq j$ and let $q_j:=c_j$, and define $\mathbf{q}:=(q_1,\ldots,q_N) \in C$. Then,
$$\sum_{i=1}^N \| x_i - p_i \|^2=\| \mathbf{x}-\mathbf{p} \|^2  \leq \| \mathbf{x}-\mathbf{q} \|^2=\sum_{i=1}^N \| x_i - q_i \|^2=\sum_{i=1\atop i\neq j}^N \| x_i - p_i \|^2+\|x_j-c_j\|^2,$$
whence, $\|x_j-p_j\|\leq \|x_j-c_j\|$. Since $c_j$ was arbitrarily chosen in $C_j$, this implies that $p_j\in P_{C_j}(x_j)$, and concludes the proof.
\end{proof}

Pierra's product space formulation works well in practice whenever the number of sets $N$ is small. When $N$ is large, the computational cost of calculating the iterates in the large space $\Hi^N$ instead of $\Hi$ becomes prohibitive. In these cases, the cyclic Douglas--Rachford algorithm with $r$-sets-Douglas--Rachford operators, recently introduced in~\cite{ACG18}, would be a better choice. The latter is a generalization of the cyclic Douglas--Rachford method of Borwein--Tam~\cite{BT14}, whose iterates are defined by
$$x_{n+1}=T_{C_N,C_1}T_{C_{N-1},C_N}\cdots T_{C_2,C_3}T_{C_1,C_2}(x_n);$$
that is, a cyclic composition of Douglas--Rachford operators applied to the sets.

\subsection{The Douglas--Rachford algorithm for nonconvex sets} \label{sec:sud}

\cref{th:DR} only guarantees the global convergence of the Douglas--Rachford algorithm for convex sets. In spite of this, the algorithm has been successfully applied as a heuristic for solving many nonconvex problems, particularly those of combinatorial type (see, e.g.,~\cite{ABT14,ABT14b,AC18,BKroad,Elser}). In most applications in the nonconvex setting, the constraint sets satisfy some type of regularity property and local convergence can be proved~\cite{BNlocal,HLnonconvex,Plinear}. Nonetheless, the results on global behavior are limited to very special sets~\cite{ABT16,B15} and cannot explain the good performance of the algorithm in the nonconvex setting.

It is clear that the performance of an algorithm heavily depends on how the problem is formulated. When the Douglas--Rachford algorithm is applied to a nonconvex problem, finding an ``adequate'' formulation can be crucial for its success as a heuristic. Indeed, the formulation of the problem often determines whether or not the algorithm can successfully solve the problem at hand always, frequently or never.

Let us briefly present an interesting example exhibiting this behavior: Sudoku puzzles. Their solution with the Douglas--Rachford algorithm was first proposed in~\cite{Elser}, and subsequently analyzed in~\cite{S8} and \cite{ABT14b}. A \emph{Sudoku} is a $9 \times 9$ grid, divided in nine $3 \times 3$ subgrids, with some entries already prefilled. The objective is to fill the remaining entries in such a way that each row, column and subgrid contains the digits from $1$ to $9$ exactly once, see \cref{fig:Sudoku}.

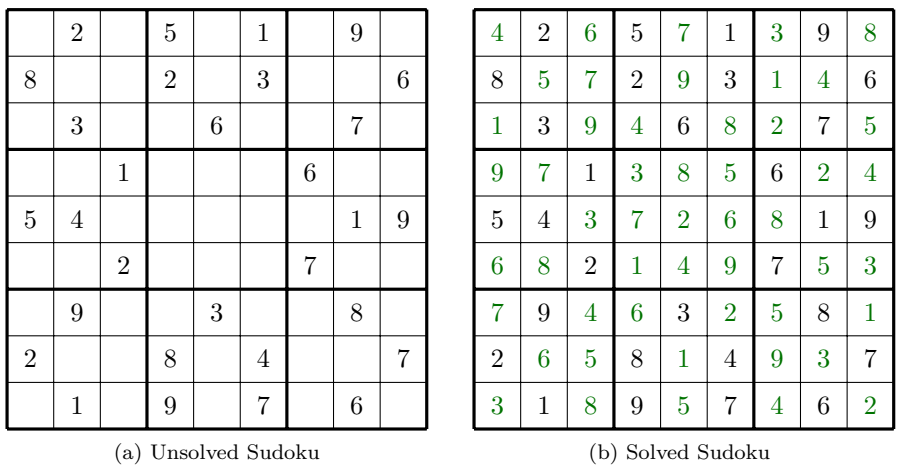
\begin{figure}[ht!]\centering
\subfloat[Unsolved Sudoku]{
\begin{tikzpicture}[scale=.62]

  \begin{scope}
    \draw (0, 0) grid (9, 9);
    \draw[very thick, scale=3] (0, 0) grid (3, 3);

    \setcounter{row}{1}
    \setrow { }{2}{ }  {5}{ }{1}  { }{9}{ }
    \setrow {8}{ }{ }  {2}{ }{3}  { }{ }{6}
    \setrow { }{3}{ }  { }{6}{ }  { }{7}{ }

    \setrow { }{ }{1}  { }{ }{ }  {6}{ }{ }
    \setrow {5}{4}{ }  { }{ }{ }  { }{1}{9}
    \setrow { }{ }{2}  { }{ }{ }  {7}{ }{ }

    \setrow { }{9}{ }  { }{3}{ }  { }{8}{ }
    \setrow {2}{ }{ }  {8}{ }{4}  { }{ }{7}
    \setrow { }{1}{ }  {9}{ }{7}  { }{6}{ }
  \end{scope}
\end{tikzpicture}
  }\quad
\subfloat[Solved Sudoku]{
\begin{tikzpicture}[scale=.62]
  \begin{scope}[xshift=12cm]
    \draw (0, 0) grid (9, 9);
    \draw[very thick, scale=3] (0, 0) grid (3, 3);

    \setcounter{row}{1}
    \setrow { }{2}{ }  {5}{ }{1}  { }{9}{ }
    \setrow {8}{ }{ }  {2}{ }{3}  { }{ }{6}
    \setrow { }{3}{ }  { }{6}{ }  { }{7}{ }

    \setrow { }{ }{1}  { }{ }{ }  {6}{ }{ }
    \setrow {5}{4}{ }  { }{ }{ }  { }{1}{9}
    \setrow { }{ }{2}  { }{ }{ }  {7}{ }{ }

    \setrow { }{9}{ }  { }{3}{ }  { }{8}{ }
    \setrow {2}{ }{ }  {8}{ }{4}  { }{ }{7}
    \setrow { }{1}{ }  {9}{ }{7}  { }{6}{ }

    \begin{scope}[green!45!black]
      \setcounter{row}{1}
      \setrow {4}{ }{6}  { }{7}{ }  {3}{ }{8}
      \setrow { }{5}{7}  { }{9}{ }  {1}{4}{ }
      \setrow {1}{ }{9}  {4}{ }{8}  {2}{ }{5}

      \setrow {9}{7}{ }  {3}{8}{5}  { }{2}{4}
      \setrow { }{ }{3}  {7}{2}{6}  {8}{ }{ }
      \setrow {6}{8}{ }  {1}{4}{9}  { }{5}{3}

      \setrow {7}{ }{4}  {6}{ }{2}  {5}{ }{1}
      \setrow { }{6}{5}  { }{1}{ }  {9}{3}{ }
      \setrow {3}{ }{8}  { }{5}{ }  {4}{ }{2}
    \end{scope}

  \end{scope}

\end{tikzpicture}}\vspace{5pt}
\caption{Example of Sudoku}\label{fig:Sudoku}
\end{figure}

Then, Sudokus are nothing else than matrix completion problems, which can be modeled as feasibility problems. There are different possibilities for choosing the constraint sets $C_1,\ldots,C_N$ in such a way that $\bigcap_{i=1}^N C_i$ coincides with the (unique) solution to the Sudoku. Nonetheless, in order to apply the Douglas--Rachford algorithm, the sets must be chosen in such a way that the projections can be efficiently computed, ideally having a closed form. In \cite[Sección 6]{ABT14b}, two ways of modeling Sudokus are analyzed: as integer and binary feasibility problems. We briefly describe them in what follows.

\subsubsection*{Sudoku modeled as an integer program}

It is obvious how to model a Sudoku as an integer feasibility problem.
Let $S$ be the partially filled $9\times 9$ matrix representing the unsolved Sudoku, let $J\subset\{1,2,\ldots,9\}^2$ be the set of indices for which $S$ is filled, and let $A_{i, j}$ denote the $(i, j)$-th entry of the matrix $A$. If we denote by $\mathcal{C}$ the set of vectors which are permutations of $1,2,\ldots,9$, then a matrix $A\in\mathbb{R}^{9\times 9}$ is a solution to the Sudoku if and only if
$A \in C_1 \cap C_2 \cap C_3 \cap C_4$, where
\begin{align*}
C_1 &:= \left\lbrace A\in\mathbb{R}^{9\times 9}: \text{each row of $A$ belongs to $\mathcal{C}$}\right\rbrace, \\
C_2 &:= \left\lbrace A\in\mathbb{R}^{9\times 9}: \text{each column of $A$ belongs to $\mathcal{C}$} \right\rbrace, \\
C_3 &:= \left\lbrace A\in\mathbb{R}^{9\times 9}: \text{each of the $9$ subgrids of $A$ belongs to $\mathcal{C}$}\right\rbrace, \\
C_4 &:= \left\lbrace A\in\mathbb{R}^{9\times 9}: A_{i,j}=S_{i,j} \text{ for each } (i,j) \in J \right\rbrace.
\end{align*}

The projection onto $C_4$ can be easily computed componentwise, while the projections onto $C_1$, $C_2$ and $C_3$ are determined by the next result, see~\cite{ABT14b} for more details.
\begin{proposition} \label{propo}
Denote by $\mathcal{C} \in \mathbb{R}^m$ the set of vectors whose entries are all permutations of $c_1, c_2, \ldots, c_m \in \mathbb{R}$. Then, for any $x \in \mathbb{R}^{m}$, one has
$$P_{\mathcal{C}} (x) = [\mathcal{C}]_x,$$
where $[\mathcal{C}]_x$ denotes the set of vectors in $\mathcal{C}$ such that $y \in [\mathcal{C}]_x$ if the $i$th largest entry of $y$ has the same index in $y$ as the $i$th largest entry of $x$.
\end{proposition}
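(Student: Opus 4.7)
The strategy is to convert the projection problem into an inner-product maximization problem and then apply the rearrangement inequality. Since every $y\in\mathcal{C}$ is a permutation of the fixed scalars $c_1,\ldots,c_m$, the quantity $\|y\|^2=\sum_{i=1}^m c_i^2$ is a constant depending only on $\mathcal{C}$, not on the particular $y$. Expanding
\[
\|x-y\|^2 = \|x\|^2 + \|y\|^2 - 2\langle x,y\rangle,
\]
I would conclude that
\[
P_{\mathcal{C}}(x) = \argmin_{y\in\mathcal{C}}\|x-y\|^2 = \argmax_{y\in\mathcal{C}}\langle x,y\rangle.
\]
So the whole claim reduces to showing that the permutations $y$ of $(c_1,\ldots,c_m)$ which maximize $\sum_{i=1}^m x_iy_i$ are exactly those in $[\mathcal{C}]_x$.

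For that, the core tool is the classical rearrangement inequality, which I would prove by a simple swap argument. Suppose $y\in\mathcal{C}$ is not in $[\mathcal{C}]_x$; then there exist indices $i,j$ with $x_i<x_j$ but $y_i>y_j$. Let $\tilde y$ be obtained from $y$ by swapping the entries in positions $i$ and $j$ (so $\tilde y\in\mathcal{C}$ as well). A direct computation yields
\[
\langle x,\tilde y\rangle-\langle x,y\rangle = (x_j-x_i)(y_i-y_j)>0,
\]
so $y$ is not a maximizer. Conversely, if $y\in[\mathcal{C}]_x$, iteratively swapping in any other $y'\in\mathcal{C}$ toward $y$ via adjacent transpositions can only (weakly) increase the inner product, so all elements of $[\mathcal{C}]_x$ attain the same maximal value. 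Hence $\argmax_{y\in\mathcal{C}}\langle x,y\rangle = [\mathcal{C}]_x$, which is the required identity.

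The main subtlety, and the step I would treat most carefully, is handling ties among the entries of $x$ or of $c_1,\ldots,c_m$. When such ties exist, the phrase ``$i$th largest entry'' in the definition of $[\mathcal{C}]_x$ is ambiguous, but the set $[\mathcal{C}]_x$ should be well-defined regardless of how ties are broken. The swap argument handles this transparently: the inequality $(x_j-x_i)(y_i-y_j)>0$ is strict precisely when the entries disagree in order, while tied entries can be freely rearranged without changing the inner product, which is exactly why $[\mathcal{C}]_x$ may be set-valued. Once this point is dispatched, the proof is a one-line consequence of the expansion of $\|x-y\|^2$ and the rearrangement inequality.
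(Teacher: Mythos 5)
Your proof is correct: reducing the projection to maximizing $\langle x,y\rangle$ over $\mathcal{C}$ (since $\|y\|^2$ is constant there) and invoking the rearrangement inequality via a swap argument is exactly the standard argument, and it is the one given in the reference \cite[Proposition~5.1]{ABT14b} to which the paper delegates this proof without reproducing it. Your attention to the tie-breaking issue, which is why $[\mathcal{C}]_x$ is genuinely set-valued, is a welcome extra precision over the bare citation.
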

\begin{proof}
See~\cite[Proposition~5.1]{ABT14b}.
\end{proof}

\subsubsection*{Sudoku modeled as a zero-one program}

To model Sudoku puzzles as binary programs, we reformulate a matrix $A \in \mathbb{R}^{9 \times 9}$ as $B \in \mathbb{R}^{9 \times 9 \times 9}$, where
$$
B_{i,j,k}:= \begin{cases}
1 & \text{if } A_{i,j} = k,\\
0 & \text{otherwise.}
\end{cases}
$$
This reformulation transforms the entries into a 3-dimensional zero-one array, which can be thought as a cube, see~\cref{fig:zero-one}.

\begin{figure}[ht!]
\begin{center}
\includegraphics[scale=0.65]{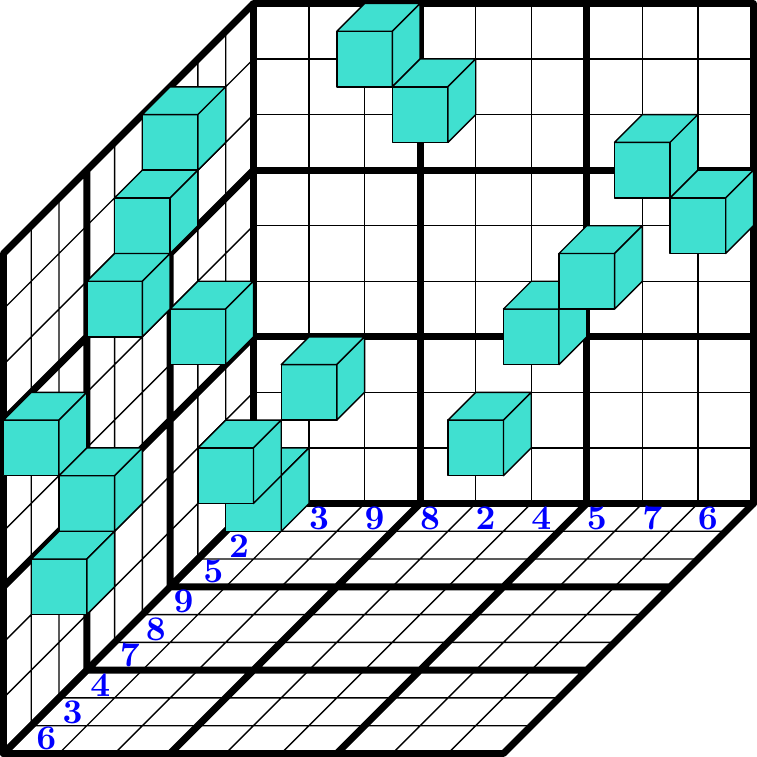}
\end{center}
\caption{Zero-one representation of a Sudoku puzzle}\label{fig:zero-one}
\end{figure}

Let us denote by $S'$ the $9 \times 9 \times 9$ zero-one array corresponding to the unsolved Sudoku $S$, let $I:=\{1,2,\ldots,9\}$, let $J' \subseteq I^3$ be the set of indices for which $S'$ is filled, and let $\mathcal{B}$ denote the $9$-dimensional standard basis. The four constraints of the integer program above become now
\begin{align*}
C_1 &:= \left\lbrace B\in\mathbb{R}^{9\times 9\times 9}: B_{i,:,k} \in \mathcal{B} \text{ for each } i,k \in I \right\rbrace, \\
C_2 &:= \left\lbrace B\in\mathbb{R}^{9\times 9\times 9}: B_{:,j,k} \in \mathcal{B} \text{ for each } j,k \in I \right\rbrace, \\
C_3 &:= \lbrace B\in\mathbb{R}^{9\times 9\times 9}: \vect  B_{3i+1:3(i+1),3j+1:3(j+1),k} \in \mathcal{B} \text{ for } i, j = 0,1,2 \text{ and } k \in I \rbrace, \\
C_4 &:= \left\lbrace B\in\mathbb{R}^{9\times 9\times 9}: B_{i,j,k}=1 \text{ for each } (i,j,k) \in J' \right\rbrace,
\end{align*}
where $\vect A$ represents the \emph{vectorization} of a matrix $A$ by columns.

Further, since each cell in the Sudoku can only have one number assigned, we must add the additional constraint
$$C_5 = \left\lbrace B\in\mathbb{R}^{9\times 9\times 9}: B_{i,j,:} \in \mathcal{B} \text{ for each } i,j \in I \right\rbrace.$$
These constraints are represented in \cref{fig:rest-sudoku}.

\begin{figure}[h]
\centering
\subfloat[$C_1$]{
\label{c1}
\includegraphics[width=0.22\textwidth]{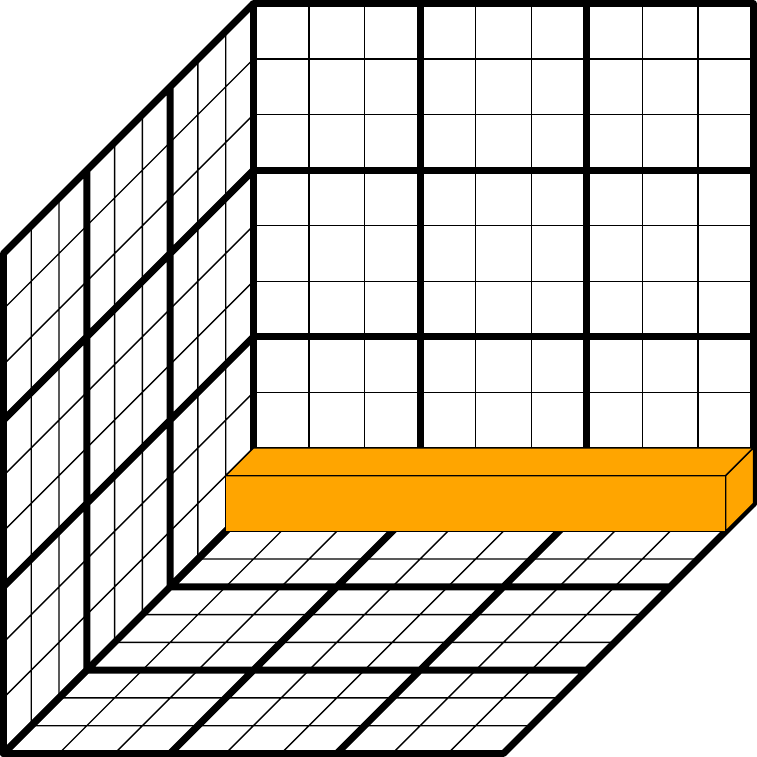}}
\hspace{1pt}
\subfloat[$C_2$]{
\label{c2}
\includegraphics[width=0.22\textwidth]{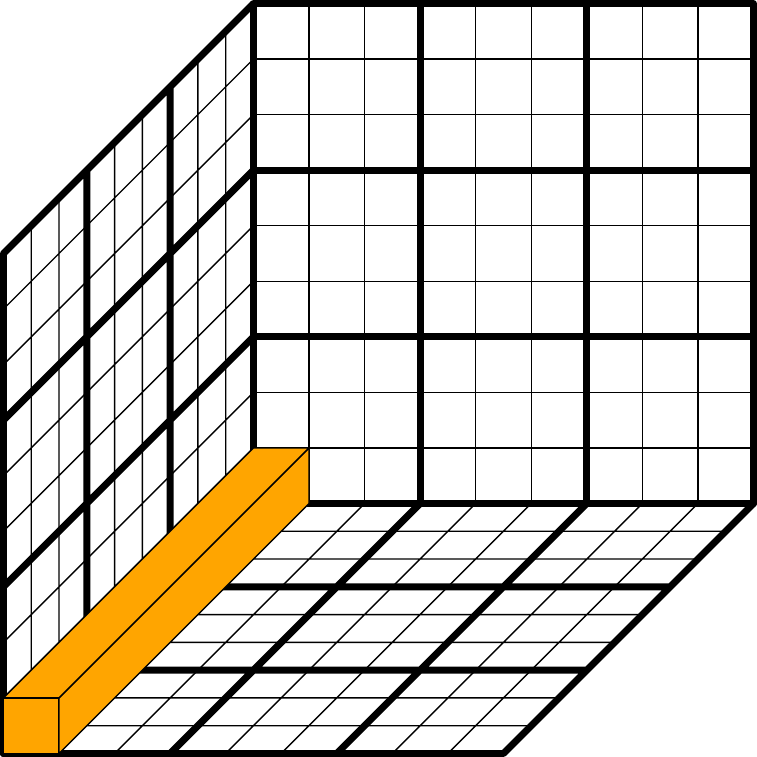}}
\hspace{1pt}
\subfloat[$C_3$]{
\label{c3}
\includegraphics[width=0.22\textwidth]{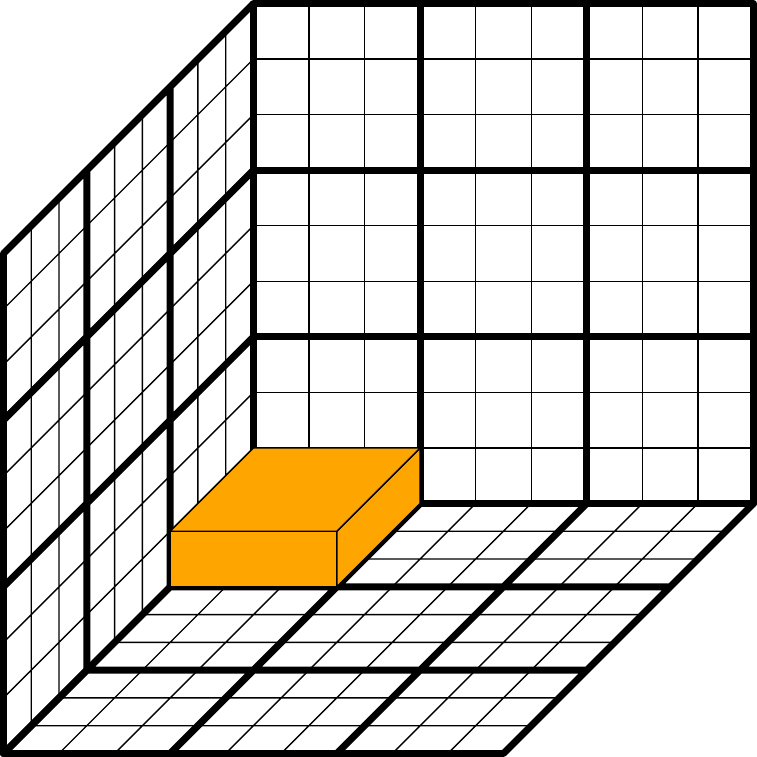}}
\hspace{1pt}
\subfloat[$C_5$]{
\label{c5}
\includegraphics[width=0.22\textwidth]{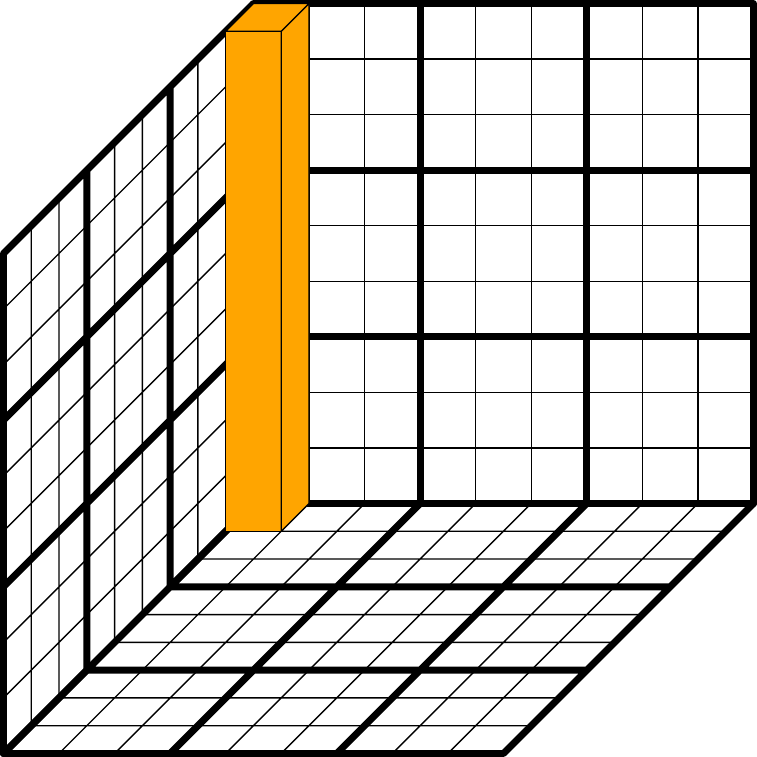}}
\vspace{2pt}
\caption{Visualization of the constraints used for modeling Sudoku as a zero-one program. The entries in the colored blocks are all $0$ except for a single~$1$}\label{fig:rest-sudoku}
\end{figure}

Then, $B$ completes $S'$ (and thus solves the Sudoku) if and only if $B \in C_1 \cap C_2 \cap C_3 \cap C_4 \cap C_5$. Again, the projection onto $C_4$ can be easily computed componentwise, while the projections onto $C_1$, $C_2$, $C_3$ and $C_5$ are determined by \cref{propo}.

\subsubsection*{Performance of the Douglas--Rachford algorithm on Sudokus}
As observed in~\cite{ABT14b}, the Douglas--Rachford algorithm is totally ineffective for solving the integer formulation. On the other hand, the algorithm is very successful when it is applied to the binary formulation, being able to solve nearly all the instances (without a restart) in all the Sudoku libraries tested in~\cite{ABT14b}. Even in the one where it was most unsuccessful, the \texttt{top95}\footnote{\texttt{top95}: \url{http://magictour.free.fr/top95}} library, it solved 87\% of the instances.

Another possibility for modeling Sudoku as feasibility problems was tested in~\cite{AC18,ACE19}, based on the fact that an unsolved Sudoku can be viewed as a graph precoloring problem. Surprisingly, the \emph{rank formulation} proposed in~\cite{ACE19} had a success rate of 100\% in the \texttt{top95} library, and no Sudoku has been found so far for which Douglas--Rachford fails to find its solution for any starting point.

Therefore, it is clear that the choice of the formulation has a big impact in the success rate of the Douglas--Rachford algorithm when it is applied in the nonconvex setting.

\section{Finding magic squares as a feasibility problem}\label{sec:3}

In this next section, we propose two different formulations for finding magic squares as a feasibility problem which are inspired by the binary and integer formulations of Sudokus. To formulate the search, the first thing we need to do is to choose some sets whose intersection includes all the properties that characterize a magic square of order $n$: the sum of each row, column and main diagonals must be equal to the magic constant $c:=\frac{n(n^2+1)}{2}$, and it must contain all the numbers between $1$ and $n^2$. Note that if our purpose was to complete a prefilled magic square, we would need to add an additional constraint that fixes the prefilled numbers.

The most obvious way to look for a magic square is to use an integer formulation. This is explained in the next subsection, where we also deduce the expression for the projection onto each of the constraint sets. Afterwards, we show how to formulate the search for magic squares as a zero-one program, in line with what we have seen for Sudokus in \cref{sec:sud}, although in a slightly different way.

\subsection{Magic squares modeled as an integer program}

If we denote by $\mathcal{P}$ the set of permutations of $1, 2 \ldots, n^2$ and $I:=\{1,2,\ldots,n\}$, then $A \in \mathbb{R}^{n \times n}$ is a magic square if and only if
$A\in\bigcap_{i=1}^5C_i$, 
where
\begin{align*}
C_1 &:= \left\lbrace A \in \mathbb{R}^{n \times n} : \sum_{j \in I} A_{i,j} = c \text{ for each } i \in I \right\rbrace, \\
C_2 &:= \left\lbrace A \in \mathbb{R}^{n \times n} : \sum_{i \in I} A_{i,j} = c \text{ for each } j \in I \right\rbrace, \\
C_3 &:= \left\lbrace A \in \mathbb{R}^{n \times n} : \sum_{i \in I} A_{i,i} = c \right\rbrace, \\
C_4 &:= \left\lbrace A \in \mathbb{R}^{n \times n} : \sum_{i \in I} A_{i,n+1-i} = c \right\rbrace, \\
C_5 &:= \left \lbrace A \in \mathbb{R}^{n \times n} : \vect  A \in \mathcal{P} \right\rbrace.
\end{align*}

The first four sets are clearly convex, and their projection operators have a closed form determined by the next result.
\begin{proposition} \label{propi}
Consider $S=\left\lbrace x \in \mathbb{R}^m  : \sum_{i=1}^m x_i =c \right\rbrace$. For any $x \in \mathbb{R}^m$,
$$P_S(x)=x +\frac{1}{m} \left( c- \sum_{i=1}^m x_i \right)e, \quad \text{where } e = [1,1,\ldots,1]^T.$$
\end{proposition}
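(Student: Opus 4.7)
The plan is to recognize that $S$ is a closed affine hyperplane with normal vector $e=[1,\ldots,1]^T$, since $S=\{x\in\mathbb{R}^m:\langle e,x\rangle=c\}$. In particular $S$ is closed and convex, so the projection is single-valued, and we can apply the characterization in~\eqref{eq:projection}: to prove that a candidate $p\in S$ equals $P_S(x)$, it suffices to verify that $\langle s-p,\,x-p\rangle\le 0$ for every $s\in S$.

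Let $p:= x+\tfrac{1}{m}\bigl(c-\sum_{i=1}^m x_i\bigr)e$. First I would check feasibility: summing the components gives $\sum_{i=1}^m p_i=\sum_{i=1}^m x_i+\tfrac{1}{m}\bigl(c-\sum_{i=1}^m x_i\bigr)\cdot m=c$, so $p\in S$. Next, observe that $x-p=-\tfrac{1}{m}\bigl(c-\sum_{i=1}^m x_i\bigr)e$ is a scalar multiple of $e$. On the other hand, for any $s\in S$, the difference $s-p$ satisfies $\langle e,s-p\rangle=\sum_{i=1}^m s_i-\sum_{i=1}^m p_i=c-c=0$, so $s-p\perp e$. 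Therefore $\langle s-p,\,x-p\rangle=0$ for every $s\in S$, and the inequality in~\eqref{eq:projection} is satisfied (with equality). Hence $p=P_S(x)$, which is the stated formula.

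There is no real obstacle here: the only thing that could require care is making sure that the characterization~\eqref{eq:projection} is being applied correctly, and an alternative, equally short route is to invoke the standard projection formula onto the hyperplane $\{x:\langle e,x\rangle=c\}$, namely $P_S(x)=x-\tfrac{\langle e,x\rangle-c}{\|e\|^2}\,e$, and simplify using $\|e\|^2=m$ and $\langle e,x\rangle=\sum_{i=1}^m x_i$.
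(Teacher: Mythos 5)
Your proof is correct. The paper's own proof is just a one-line appeal to the standard formula for projecting onto the hyperplane $S=\{x\in\mathbb{R}^m:\langle x,e\rangle=c\}$ (citing~\cite[Example~3.23]{BC17}), which is exactly the alternative route you mention at the end; your direct verification via the characterization~\eqref{eq:projection} establishes the same fact in a self-contained way, with the orthogonality $\langle s-p,x-p\rangle=0$ correctly checked.
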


\begin{proof}
This follows from the standard formula for
the orthogonal projection onto a hyperplane, since $S=\left\lbrace x \in \mathbb{R}^m  : \langle x,e\rangle =c \right\rbrace$; see, e.g.,~\cite[Example~3.23]{BC17}.
\end{proof}\smallskip

Thereby, the projections onto each of these sets are given by
\begin{align*}
P_{C_1}(A) &= A + \frac{1}{n}\left( \begin{array}{ccc} c-\sum_{i=1}^n A_{1,i} & \cdots & c -\sum_{i=1}^n A_{1,i}\\
\vdots & & \vdots \\ c-\sum_{i=1}^n A_{n,i} & \cdots & c -\sum_{i=1}^n A_{n,i}
\end{array} \right),\\
P_{C_2}(A)&= A + \frac{1}{n}\left( \begin{array}{ccc} c-\sum_{i=1}^n A_{i,1} &  \cdots & c -\sum_{i=1}^n A_{i,n}\\
\vdots &  &  \vdots \\
c-\sum_{i=1}^n A_{i,1}  & \cdots & c -\sum_{i=1}^n A_{i,n}
\end{array} \right),\\
P_{C_3}(A)&= A + \frac{1}{n}\left(c-\sum_{i=1}^n A_{i,i}\right)I_n,\\
P_{C_4}(A)&= A + \frac{1}{n}\left(c-\sum_{i=1}^n A_{i,n+i-1}\right) \left( \begin{array}{cccc} 0 & \cdots & 0 & 1\\
\vdots &\udots & 1 &0\\
0 & \udots  &  \udots& \vdots\\
1 & 0 & \cdots &0
\end{array} \right),
\end{align*}
where $I_n$ denotes the identity matrix in $\mathbb{R}^{n\times n}$.

Finally, $C_5$ is a nonconvex set containing all those matrices whose entries are permutations of $ 1, 2, \ldots, n ^ 2 $, so its projection operator is determined by \cref{propo}.
Therefore, given an $ n \times n $ matrix, to compute its projection onto $C_5$, we just need to sort the elements of the matrix in ascending order and place the number $1$ in the cell that contains the smallest number, the number $2$ in the cell that contains the next one, and so on. If there are two equal elements, the projection of the matrix is not unique (in our experiments in \cref{sec:num_exp}, we chose the element that appeared first when the matrix was read by rows).

\subsubsection*{Completing a partially filled magic square}

If $M$ is a partially complete matrix representing an incomplete magic square, denote by $J \subseteq I^2$ the set of indices for which $M$ is filled. To find a completion of the magic square, we need to add the constraint
$$C_6 := \left\lbrace A \in \mathbb{R}^{n \times n} : A_{i,j} = M_{i,j} \text{ for each } (i,j) \in J \right\rbrace $$
to the set of constraints that we already have. Then, $A$ completes $M$ if, and only if, $$A \in C_1 \cap C_2 \cap C_3 \cap C_4 \cap C_5 \cap C_6.$$
The projection onto $C_6$ is given componentwise by
$$
P_{C_6} (A_{i,j}) = \begin{cases}
M_{i,j} & \text{if } (i,j) \in J,\\
A_{i,j} & \text{otherwise, }
\end{cases}
$$
for each $(i,j) \in I^2$.

\subsection{Magic squares modeled as a binary program}

To model the search for a magic square as a binary feasibility problem, we reformulate a matrix $A \in \mathbb{R}^{n\times n}$ as $B \in \mathbb{R}^{n\times n \times n^2}$, where
$$
B_{i,j,k}:= \begin{cases}
1 & \text{if } A_{i,j} \geq k,\\
0 & \text{otherwise.}
\end{cases}
$$
In this way, we transform the entries of the matrix into a $3$-dimensional zero-one array, and each number in the magic square can be thought as a \emph{pillar} made of small cubes, see Fig. \ref{fig:bin}.

\begin{figure}
	\centering
	\begin{minipage}{.45\textwidth}
		\begin{center}
			\begin{TAB}(e,0.75cm,0.75cm){|c|c|c|}{|c|c|c|}
				\textbf{2} & \textbf{7} & \textbf{6}\\
				9 & 5 & 1\\
				4 & 3 & 8\\
			\end{TAB}
		\end{center}
	\end{minipage}
	\begin{minipage}{.45\textwidth}
		\includegraphics[width=0.25\textwidth]{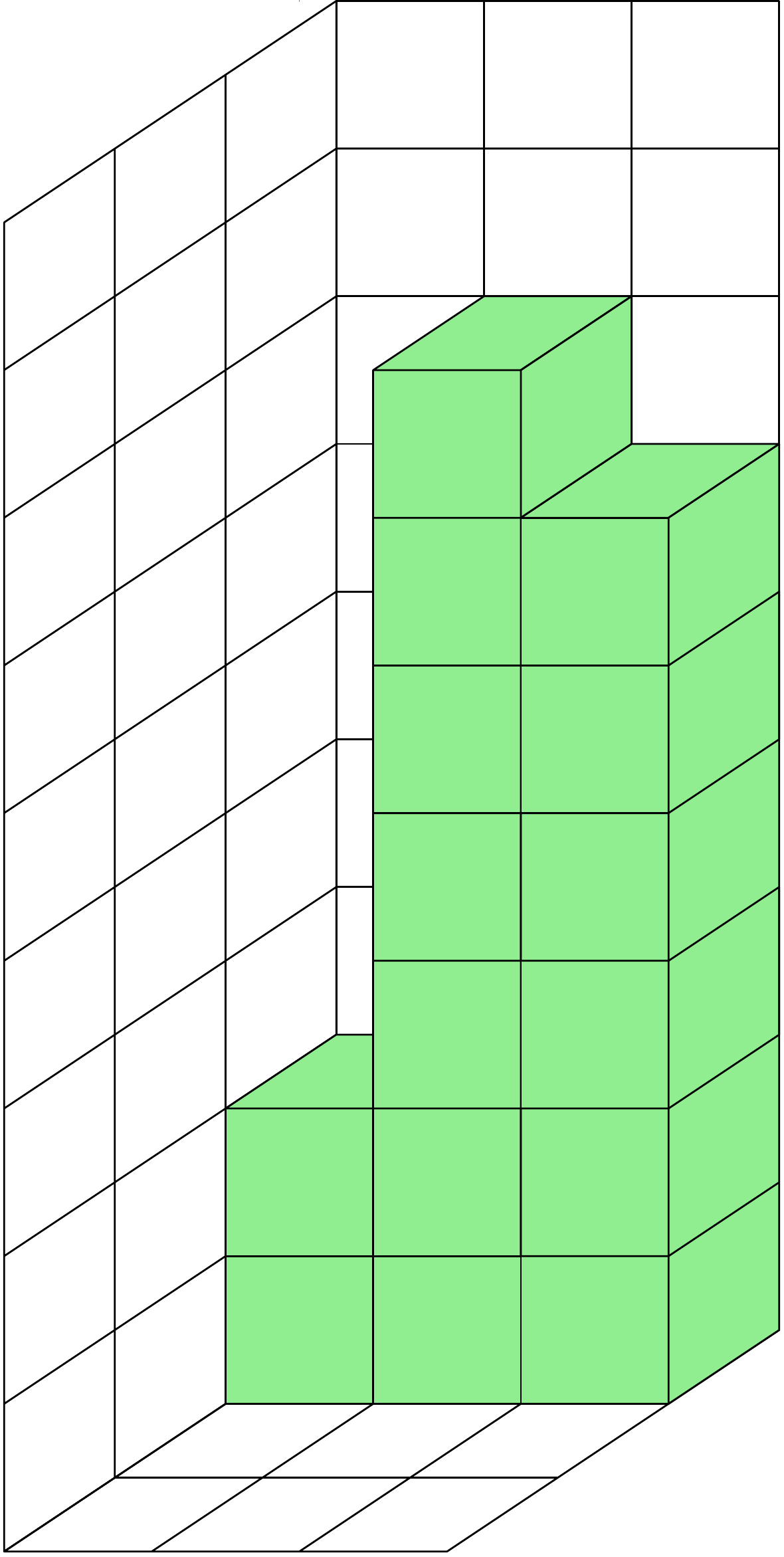}
	\end{minipage}\vspace{5pt}
	\caption{Representation of the numbers in the first row of a $3\times 3$ magic square  as columns of small cubes for the binary formulation\label{fig:bin}}
\end{figure}
The constraints from the previous subsection become (see \cref{vis}):
\begin{align*}
C_1 &:= \left\lbrace B \in \mathbb{R}^{n\times n \times n^2} : \sum_{j=1}^n  \sum_{k=1}^{n^2} B_{i,j,k}  = c \text{ for each } i \in I \right\rbrace,\\
C_2 &:= \left\lbrace B \in \mathbb{R}^{n\times n \times n^2} : \sum_{i=1}^n  \sum_{k=1}^{n^2} B_{i,j,k}  = c \text{ for each } j \in I \right\rbrace,\\
C_3 &:= \left\lbrace B \in \mathbb{R}^{n\times n \times n^2} : \sum_{i=1}^n \sum_{k=1}^{n^2} B_{i,i,k}  = c \right\rbrace,\\
C_4 &:= \left\lbrace B \in \mathbb{R}^{n\times n \times n^2} : \sum_{i=1}^n  \sum_{k=1}^{n^2} B_{i,n+1-i,k}= c \right\rbrace.
\end{align*}
\begin{figure}[ht!]
	\centering
	\subfloat[$C_1$]{
		\label{c_1}
		\includegraphics[width=0.15\textwidth]{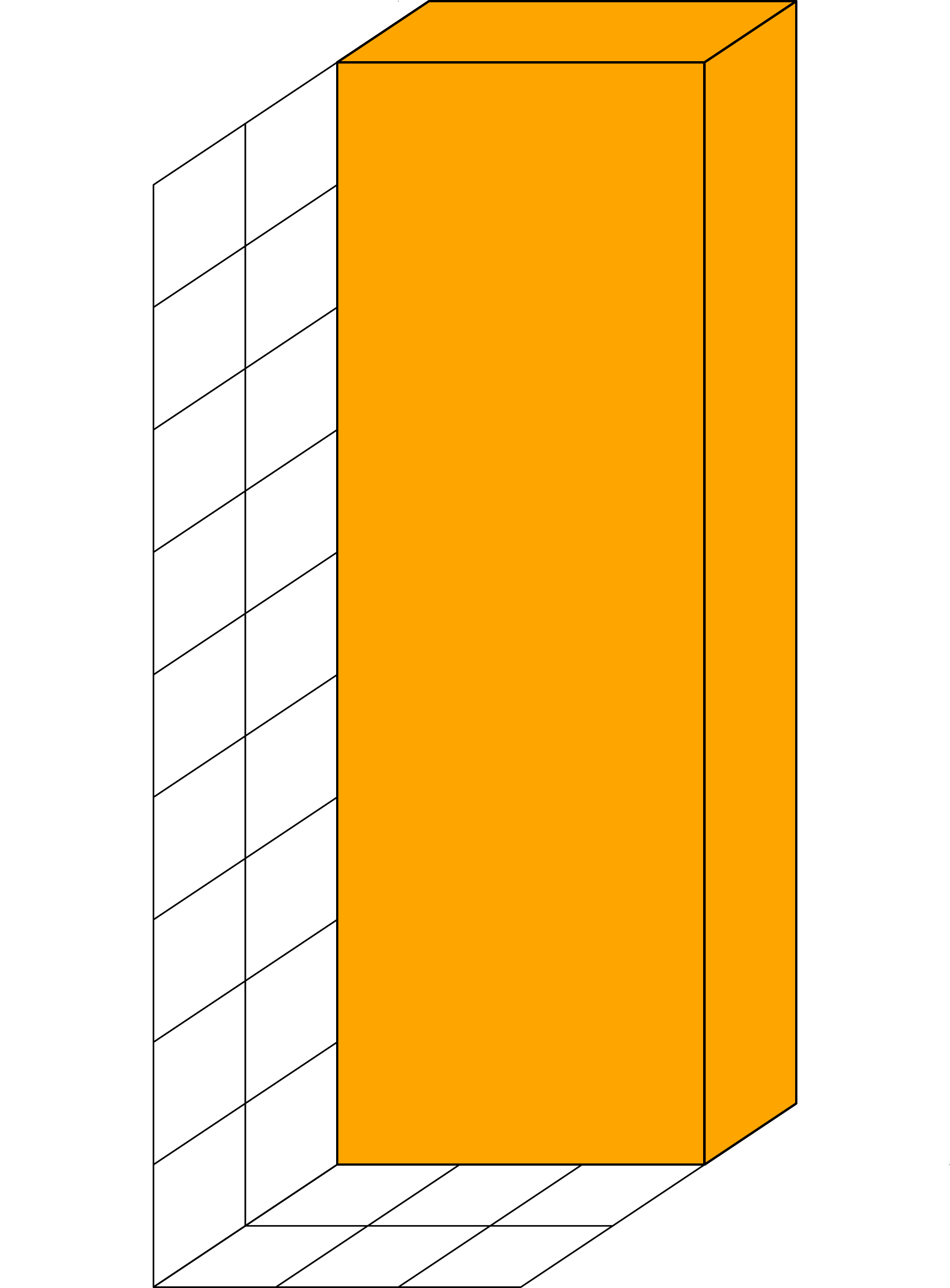}}
	\hspace{10pt}
	\subfloat[$C_2$]{
		\label{c_2}
		\includegraphics[width=0.15\textwidth]{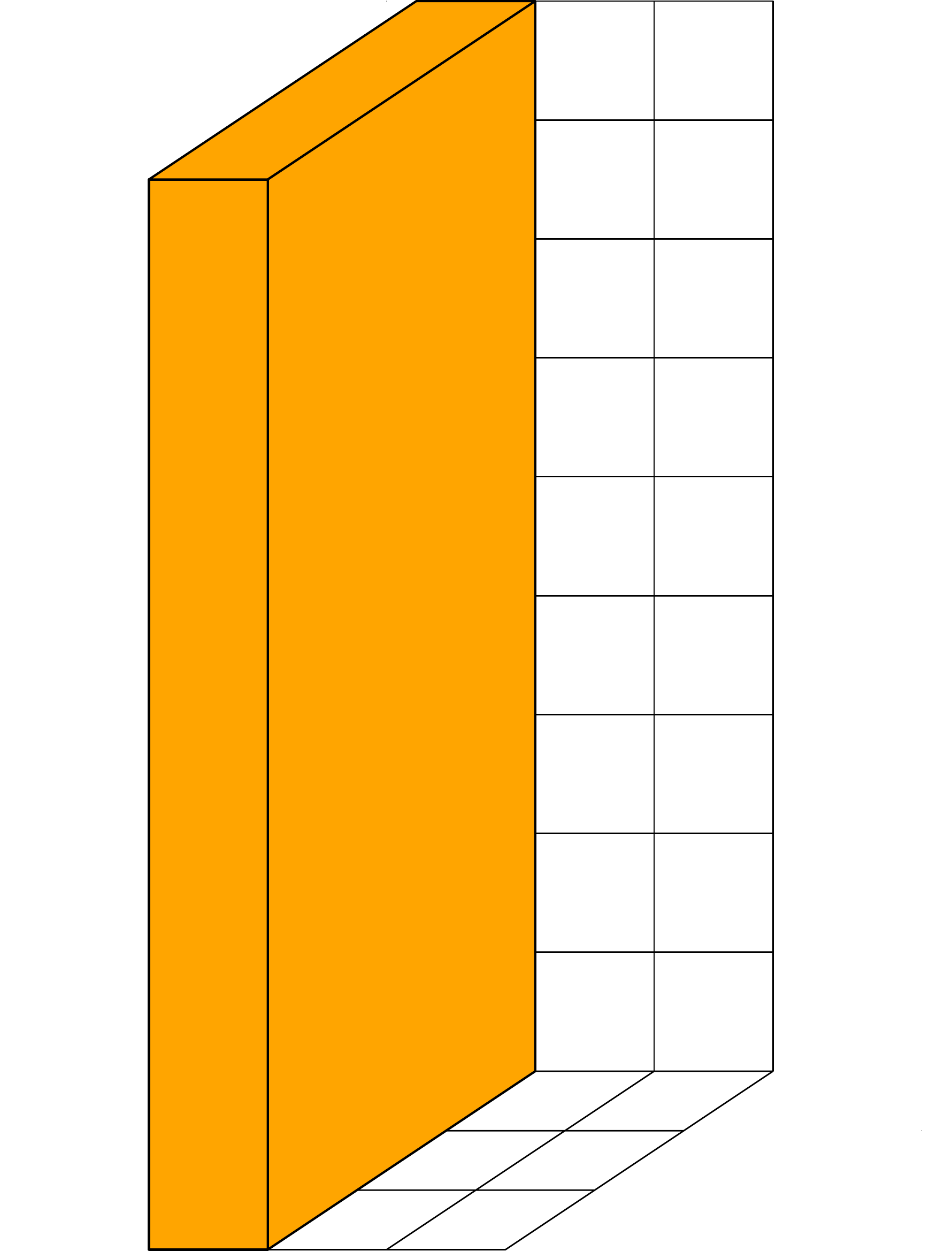}}
	\hspace{10pt}
	\subfloat[$C_3$]{
		\label{c_3}
		\includegraphics[width=0.15\textwidth]{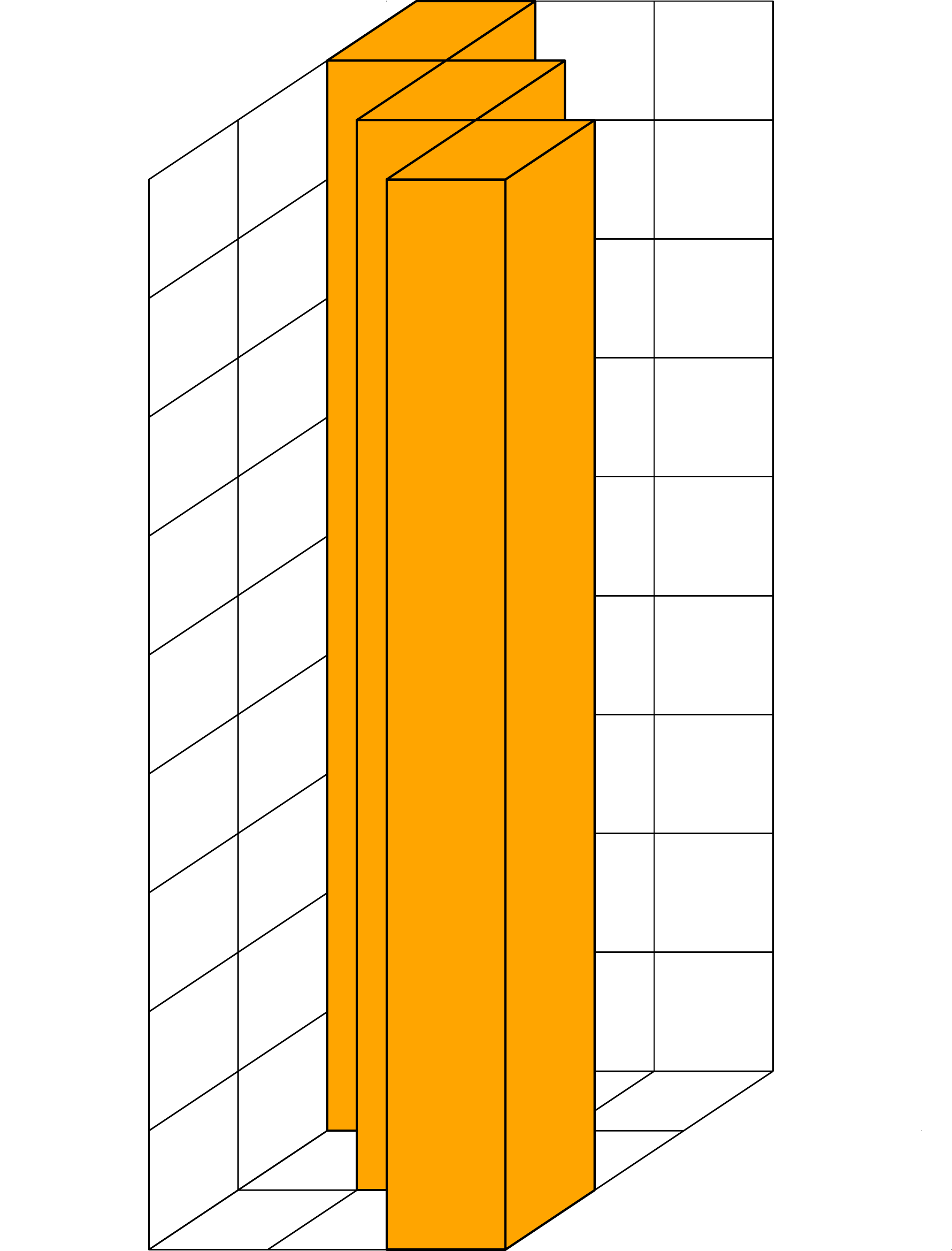}}
	\hspace{10pt}
	\subfloat[$C_4$]{
		\label{c_4}
		\includegraphics[width=0.15\textwidth]{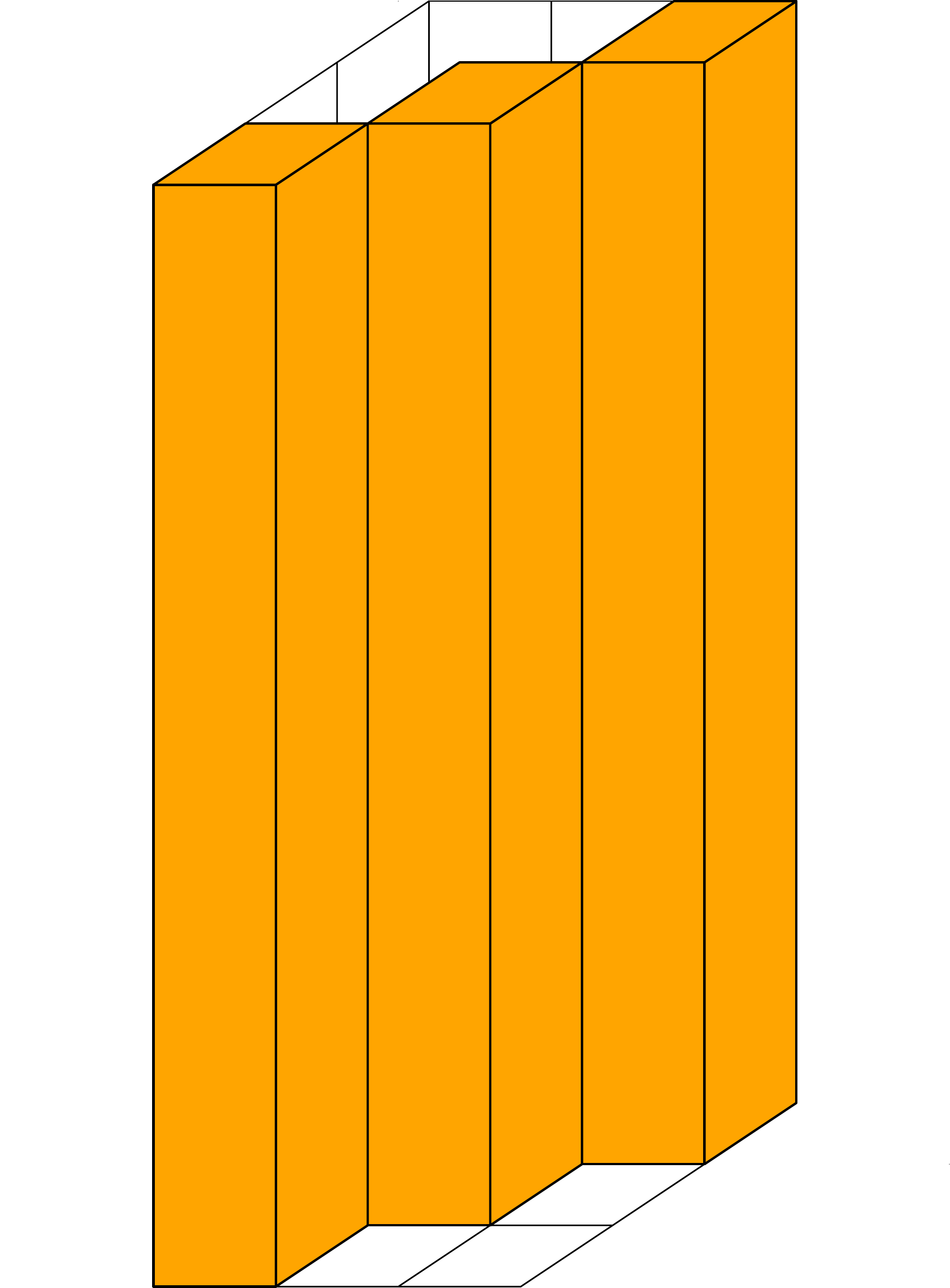}}
	\vspace{2pt}
	\label{rest}
	\caption{Visualization of the constraints used for modeling magic squares as a zero-one program. Each colored block must be formed by exactly $c$ cubes (ones)} \label{vis}
\end{figure}

Constraint $C_5$ of the integer formulation is now written for the binary formulation as the intersection of two sets:
\begin{align*}
C_5 &:= \left\lbrace B \in \{ 0,1 \}^{n \times n \times n^2} : \sum_{i=1}^n  \sum_{j=1}^{n} B_{i,j,k} = n^2 - k +1, 
\forall k=1, \ldots, n^2 \right\rbrace,\\
C_6 &:= \left \lbrace B \in \{ 0,1 \}^{n \times n \times n^2} : \vect B_{i,j,:} \in \mathcal{B} \text{ for each } i,j \in I \right\rbrace,
\end{align*}
where $\mathcal{B}:=\left\lbrace [1,1,\ldots,1,1], [1,1,\ldots,1,0], \ldots, [1,0,\ldots,0,0] \right\rbrace$ is a base of vectors of $\mathbb{R}^{n^2}$.
On the one hand, constraint $C_5$ guarantees that the first \emph{floor} of $B$ is all filled with ones, the second must have $n^2-1$ ones, and so on until we get to the top \emph{floor}, which only contains a one. On the other hand, constraint $C_6$ guarantees that the matrix is formed by \emph{pillars} of ones \emph{standing} on the \emph{floor}, so if there is a one in an entry, all the elements below must be ones as well. Then, $B \in \mathbb{R}^{n\times n \times n^2}$ is a magic square if and only if
$B\in \bigcap_{i=1}^6 C_i.$

 The projections onto the first four sets are given by \cref{propi}:
\begin{align*}
P_{C_1}(B)&= B + \frac{1}{n^3}\left( \begin{array}{ccc} u_1 & \cdots & u_1\\
u_2 & \cdots & u_2\\
 \vdots & & \vdots \\
u_n & \cdots & u_n
\end{array} \right),\\
P_{C_2}(B)&= B + \frac{1}{n^3}\left( \begin{array}{cccc} v_1 & v_2 & \cdots & v_n\\
\vdots & \vdots & & \vdots \\
v_1 & v_2 & \cdots & v_n
\end{array} \right),\\
P_{C_3}(B)&= B + \frac{1}{n^3}\left(c-\sum_{i=1}^n\sum_{k=1}^{n^2} B_{iik}\right)\left( \begin{array}{cccc} e & 0_{n^2}& \cdots & 0_{n^2}\\
0_{n^2} & e & \ddots &\vdots\\
\vdots & \ddots  & \ddots & 0_{n^2}\\
0_{n^2} &  \cdots &0_{n^2} &e
\end{array} \right),\\
P_{C_4}(B)&= B + \frac{1}{n^3}\left(c-\sum_{i=1}^n\sum_{k=1}^{n^2} B_{i(n+i-1)k}\right) \left( \begin{array}{cccc} 0_{n^2} & \cdots & 0_{n^2} & e\\
\vdots &\udots & e &0_{n^2}\\
0_{n^2} & \udots  &  \udots& \vdots\\
e & 0_{n^2} & \cdots &0_{n^2}
\end{array} \right).
\end{align*}
where $e=(1,\ldots, 1) \in \mathbb{R}^{n^2}$ and
\begin{align*}
u_p&:=\left(c-\sum_{j=1}^{n}\sum_{k=1}^{n^2}  B_{p,j,k}, \ldots, c- \sum_{j=1}^{n}\sum_{k=1}^{n^2} B_{p,j,k}\right) \in \mathbb{R}^{n^2},\\
v_p&:=\left(c-\sum_{i=1}^{n}\sum_{k=1}^{n^2}  B_{i,p,k}, \ldots, c- \sum_{i=1}^{n}\sum_{k=1}^{n^2} B_{i,p,k}\right) \in \mathbb{R}^{n^2}.
\end{align*}

The projection onto $C_5$ assigns a one to all the elements of the first \emph{floor}, a zero to the smallest element and a one to the rest in the second \emph{floor}, a zero to the two smallest elements and a one to the rest in the third \emph{floor}, and so on until the top \emph{floor}, which only contains a one.
The projection onto $C_6$ can be computed componentwise as follows: For each $i,j\in I$, we have
$$P_{C_6}(B)_{i,j,:}=\argmin_{b\in\mathcal{B}}\|B_{i,j,:}-b\|.$$



\begin{remark}
 The formulation of the problem of completing a partially filled magic square  is analogous to the integer formulation, so we omit the details.
\end{remark}

%

\section{Numerical experiments}\label{sec:num_exp}

In this section we run a numerical experiment to test the performance of the Douglas--Rachford
algorithm for finding magic squares of order $n=3,4,\ldots,12$, on which we compare the integer and binary formulations discussed in \cref{sec:3}. All our codes were written in Python~2.7 and the tests were run on an Intel Core i7-4770 CPU \@3.40GHz with 32GB RAM, under Windows 10 (64-bit).

For each order and each formulation, the algorithm was run on the corresponding product space (see \cref{sec:product_space}) for $100$ random starting points.
For the integer formulation, we took a starting point of the form $x_0:=(y,y,y,y,y) \in D$ for some $y \in \mathbb{R}^{n \times n}$ randomly chosen with entries in $]0,1[$, while for the binary formulation, we choose $x_0:=(y,y,y,y,y,y) \in D$ for some random $y \in \mathbb{R}^{n\times n \times n^2}$ with entries in $]0,1[$. The iterates were then defined by
$$x_{n+1}=T_{D,C}(x_n)\quad\text{for }n=0,1,\ldots.$$
The Douglas--Rachford algorithm was stopped either after a maximum time of $1800$ seconds ($30$ minutes) or when
$$\| \operatorname{round}(P_D(x_n)) - P_{C_i}(\operatorname{round}(P_D(x_n)))\|\leq 0.05\quad\text{ for all }i,$$
where $\operatorname{round}(\cdot)$ gives the nearest integer componentwise.

\cref{table:porc} summarizes the success rate and running time for each formulation. We observe how the binary formulation was not able to solve any instance for orders larger than $6$. On the other hand, for orders $4$ and $5$, it was more successful than the integer formulation, which surprisingly becomes more successful for larger orders than for smaller ones. The algorithm was much faster with the integer formulation.
%
%

\begin{table}[htbp]
\begin{center}
\begin{tabular}{|c|cd{3.3}d{3.3}||cd{3.3}d{3.3}|}
\cline{2-7}
\multicolumn{1}{c|}{}& \multicolumn{3}{c||}{Binary} & \multicolumn{3}{c|}{Integer}\\
\hline
 Order & Success & \multicolumn{2}{c||}{\text{Time}}  & Success & \multicolumn{2}{c|}{\text{Time}}\\
 \hline
 3 & 99 &   0.05 & (0.17) & 100 & 0.01 &(0.02)\\
 4 & 93 &   1.67 & (13.02) &  64 & 0.06 &(2.64)\\
 5 & 79 & 436.86 & (1739.93) &  59 & 0.67 &(5.63)\\
 6 &  3 &1228.78 & (1742.74) &  80 & 0.53 &(2.55)\\
 7 &  0 & \multicolumn{2}{c||}{-} &  86 & 0.74& (4.92)\\
 8 &  0 & \multicolumn{2}{c||}{-}      &  94 & 0.86& (4.39)\\
 9 &  0 & \multicolumn{2}{c||}{-}      &  96 & 0.90 &(3.39)\\
10 &  0 & \multicolumn{2}{c||}{-}      &  94 & 1.47 &(8.69)\\
11 &  0 & \multicolumn{2}{c||}{-}      &  97 & 2.45 &(13.97)\\
12 &  0 & \multicolumn{2}{c||}{-}      &  99 & 4.86 &(22.97)\\
13 &  0 & \multicolumn{2}{c||}{-}      &  98 & 9.64 &(56.71)\\
14 &  0 & \multicolumn{2}{c||}{-}      &  100 & 24.31 &(84.90)\\
15 &  0 & \multicolumn{2}{c||}{-}      &  100 & 60.04 &(236.27)\\
16 &  0 & \multicolumn{2}{c||}{-}      &  100 & 245.58 &(1291.61)\\
\hline
\end{tabular}\vspace{5pt}
\caption{Magic squares found in less than $30$ minutes and mean (max) time}
\label{table:porc}
\end{center}
\end{table}

To better understand the behavior of the Douglas--Rachford algorithm for both formulations, we show in \cref{fig:cumfreq} the cumulative frequency over time. Note that we have used a logarithmic scale in the horizontal axis, to show how the running time of the algorithm exponentially grows with the order $n$. Apparently, the reason why the algorithm failed in the binary formulation for $n$ larger than $6$ is that it did not have enough time to converge. We show in \cref{fig:magicsquares} a magic square of orders between $3$ and $12$ found with the integer formulation.

\begin{figure}[ht!]
\centering
\includegraphics[width=\textwidth]{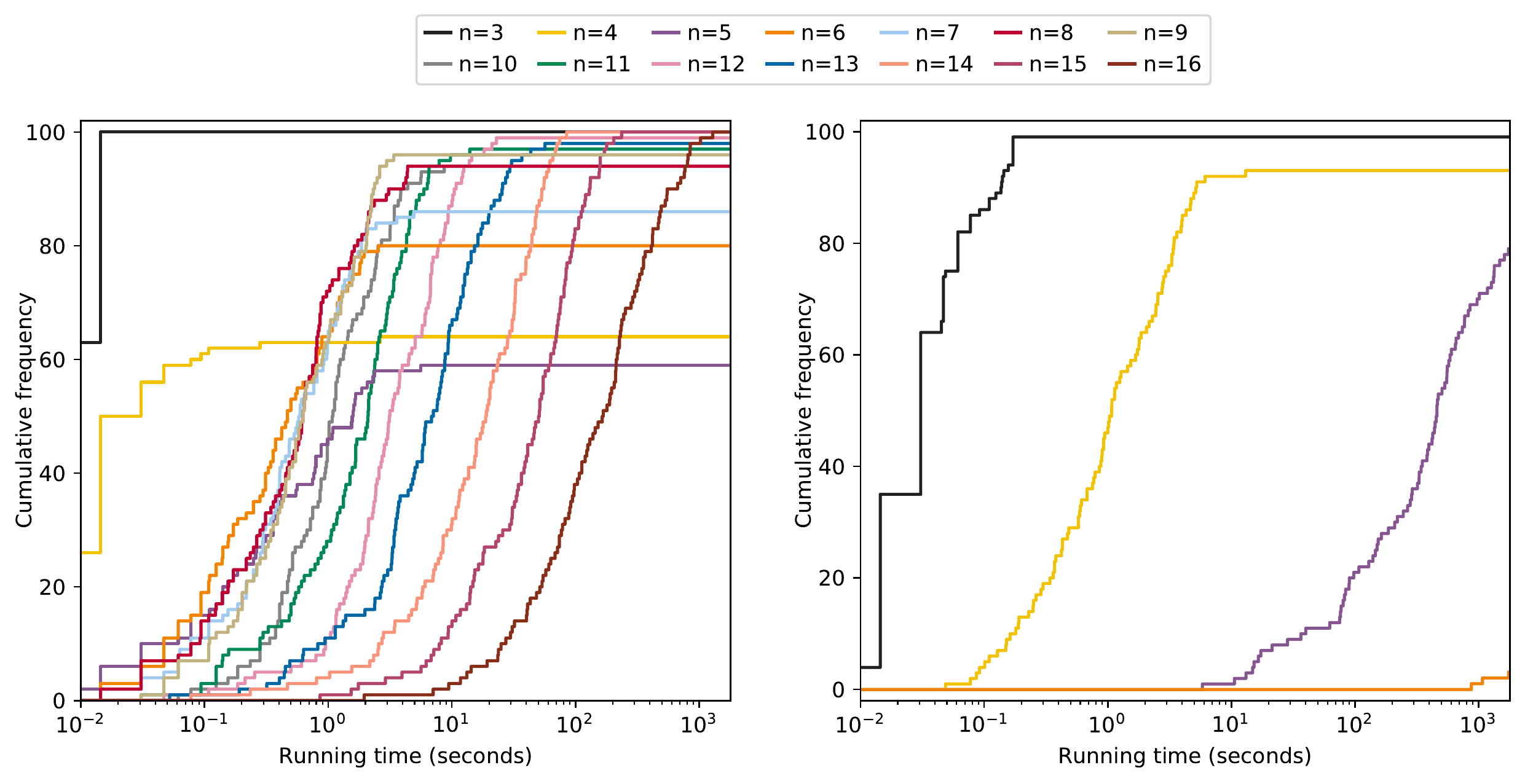}
\caption{Cumulative frequencies over running time of the integer (left) and binary (right) formulations}\label{fig:cumfreq}
\end{figure}

\renewcommand{\arraystretch}{1.3}
\begin{sidewaysfigure}[htbp]
\scriptsize\begin{center}
\begin{tabular}{|C|C|C|C|C|C|C|C|C|C|C|CCCCCC|C|C|C|C|C|C|C|C|C|C|C|C|}
\cline{1-3} \cline{5-8} \cline{10-14} \cline{16-21} \cline{23-29}
8 & 3 & 4 &  & 11 & 14 & 5 & 4 &  & 2 & 14 & \multicolumn{1}{C|}{20} & \multicolumn{1}{C|}{11} & \multicolumn{1}{C|}{18} & \multicolumn{1}{C|}{} & \multicolumn{1}{C|}{6} & 35 & 2 & 31 & 29 & 8 &  & 13 & 4 & 33 & 28 & 38 & 18 & 41\tabularnewline
\cline{1-3} \cline{5-8} \cline{10-14} \cline{16-21} \cline{23-29}
1 & 5 & 9 &  & 16 & 2 & 9 & 7 &  & 19 & 9 & \multicolumn{1}{C|}{5} & \multicolumn{1}{C|}{8} & \multicolumn{1}{C|}{24} & \multicolumn{1}{C|}{} & \multicolumn{1}{C|}{5} & 27 & 17 & 9 & 19 & 34 &  & 10 & 22 & 30 & 34 & 25 & 17 & 37\tabularnewline
\cline{1-3} \cline{5-8} \cline{10-14} \cline{16-21} \cline{23-29}
6 & 7 & 2 &  & 1 & 15 & 8 & 10 &  & 15 & 17 & \multicolumn{1}{C|}{23} & \multicolumn{1}{C|}{3} & \multicolumn{1}{C|}{7} & \multicolumn{1}{C|}{} & \multicolumn{1}{C|}{14} & 13 & 23 & 15 & 16 & 30 &  & 43 & 44 & 15 & 29 & 14 & 6 & 24\tabularnewline
\cline{1-3} \cline{5-8} \cline{10-14} \cline{16-21} \cline{23-29}
\multicolumn{1}{C}{} & \multicolumn{1}{C}{} & \multicolumn{1}{C}{} &  & 6 & 3 & 12 & 13 &  & 25 & 12 & \multicolumn{1}{C|}{1} & \multicolumn{1}{C|}{21} & \multicolumn{1}{C|}{6} & \multicolumn{1}{C|}{} & \multicolumn{1}{C|}{32} & 10 & 26 & 12 & 28 & 3 &  & 16 & 47 & 1 & 31 & 5 & 48 & 27\tabularnewline
\cline{5-8} \cline{10-14} \cline{16-21} \cline{23-29}
\multicolumn{1}{C}{} & \multicolumn{1}{C}{} & \multicolumn{1}{C}{} & \multicolumn{1}{C}{} & \multicolumn{1}{C}{} & \multicolumn{1}{C}{} & \multicolumn{1}{C}{} & \multicolumn{1}{C}{} &  & 4 & 13 & \multicolumn{1}{C|}{16} & \multicolumn{1}{C|}{22} & \multicolumn{1}{C|}{10} & \multicolumn{1}{C|}{} & \multicolumn{1}{C|}{33} & 22 & 7 & 20 & 18 & 11 &  & 8 & 26 & 21 & 23 & 49 & 45 & 3\tabularnewline
\cline{10-14} \cline{16-21} \cline{23-29}
\multicolumn{1}{C}{} & \multicolumn{1}{C}{} & \multicolumn{1}{C}{} & \multicolumn{1}{C}{} & \multicolumn{1}{C}{} & \multicolumn{1}{C}{} & \multicolumn{1}{C}{} & \multicolumn{1}{C}{} & \multicolumn{1}{C}{} & \multicolumn{1}{C}{} & \multicolumn{1}{C}{} &  &  &  & \multicolumn{1}{C|}{} & \multicolumn{1}{C|}{21} & 4 & 36 & 24 & 1 & 25 &  & 46 & 12 & 40 & 19 & 42 & 9 & 7\tabularnewline
\cline{16-21} \cline{23-29}
\multicolumn{1}{C}{} & \multicolumn{1}{C}{} & \multicolumn{1}{C}{} & \multicolumn{1}{C}{} & \multicolumn{1}{C}{} & \multicolumn{1}{C}{} & \multicolumn{1}{C}{} & \multicolumn{1}{C}{} & \multicolumn{1}{C}{} & \multicolumn{1}{C}{} & \multicolumn{1}{C}{} &  &  &  &  &  & \multicolumn{1}{C}{} & \multicolumn{1}{C}{} & \multicolumn{1}{C}{} & \multicolumn{1}{C}{} & \multicolumn{1}{C}{} &  & 39 & 20 & 35 & 11 & 2 & 32 & 36\tabularnewline
\cline{23-29}
\multicolumn{1}{C}{} & \multicolumn{1}{C}{} & \multicolumn{1}{C}{} & \multicolumn{1}{C}{} & \multicolumn{1}{C}{} & \multicolumn{1}{C}{} & \multicolumn{1}{C}{} & \multicolumn{1}{C}{} & \multicolumn{1}{C}{} & \multicolumn{1}{C}{} & \multicolumn{1}{C}{} &  &  &  &  &  & \multicolumn{1}{C}{} & \multicolumn{1}{C}{} & \multicolumn{1}{C}{} & \multicolumn{1}{C}{} & \multicolumn{1}{C}{} & \multicolumn{1}{C}{} & \multicolumn{1}{C}{} & \multicolumn{1}{C}{} & \multicolumn{1}{C}{} & \multicolumn{1}{C}{} & \multicolumn{1}{C}{} & \multicolumn{1}{C}{} & \multicolumn{1}{C}{}\tabularnewline
\cline{1-8} \cline{10-18} \cline{20-29}
10 & 15 & 41 & 58 & 54 & 23 & 34 & 25 &  & 71 & 14 & \multicolumn{1}{C|}{40} & \multicolumn{1}{C|}{68} & \multicolumn{1}{C|}{28} & \multicolumn{1}{C|}{19} & \multicolumn{1}{C|}{22} & 70 & 37 &  & 76 & 64 & 57 & 21 & 17 & 59 & 97 & 33 & 74 & 7\tabularnewline
\cline{1-8} \cline{10-18} \cline{20-29}
33 & 55 & 3 & 28 & 49 & 47 & 38 & 7 &  & 46 & 29 & \multicolumn{1}{C|}{59} & \multicolumn{1}{C|}{21} & \multicolumn{1}{C|}{58} & \multicolumn{1}{C|}{62} & \multicolumn{1}{C|}{51} & 10 & 33 &  & 69 & 49 & 34 & 39 & 75 & 9 & 40 & 52 & 48 & 90\tabularnewline
\cline{1-8} \cline{10-18} \cline{20-29}
14 & 50 & 32 & 1 & 59 & 57 & 42 & 5 &  & 43 & 64 & \multicolumn{1}{C|}{1} & \multicolumn{1}{C|}{66} & \multicolumn{1}{C|}{47} & \multicolumn{1}{C|}{24} & \multicolumn{1}{C|}{78} & 16 & 30 &  & 66 & 24 & 22 & 72 & 100 & 78 & 81 & 6 & 2 & 54\tabularnewline
\cline{1-8} \cline{10-18} \cline{20-29}
51 & 40 & 27 & 18 & 11 & 43 & 31 & 39 &  & 4 & 73 & \multicolumn{1}{C|}{36} & \multicolumn{1}{C|}{5} & \multicolumn{1}{C|}{34} & \multicolumn{1}{C|}{77} & \multicolumn{1}{C|}{52} & 50 & 38 &  & 19 & 37 & 68 & 84 & 14 & 18 & 92 & 55 & 91 & 27\tabularnewline
\cline{1-8} \cline{10-18} \cline{20-29}
20 & 30 & 61 & 63 & 17 & 4 & 36 & 29 &  & 3 & 20 & \multicolumn{1}{C|}{67} & \multicolumn{1}{C|}{56} & \multicolumn{1}{C|}{65} & \multicolumn{1}{C|}{26} & \multicolumn{1}{C|}{17} & 55 & 60 &  & 8 & 73 & 10 & 13 & 12 & 89 & 96 & 70 & 51 & 83\tabularnewline
\cline{1-8} \cline{10-18} \cline{20-29}
44 & 37 & 22 & 13 & 12 & 62 & 6 & 64 &  & 45 & 49 & \multicolumn{1}{C|}{69} & \multicolumn{1}{C|}{11} & \multicolumn{1}{C|}{12} & \multicolumn{1}{C|}{75} & \multicolumn{1}{C|}{44} & 25 & 39 &  & 58 & 31 & 62 & 86 & 87 & 23 & 32 & 95 & 5 & 26\tabularnewline
\cline{1-8} \cline{10-18} \cline{20-29}
53 & 9 & 48 & 19 & 56 & 8 & 21 & 46 &  & 74 & 80 & \multicolumn{1}{C|}{2} & \multicolumn{1}{C|}{57} & \multicolumn{1}{C|}{35} & \multicolumn{1}{C|}{31} & \multicolumn{1}{C|}{9} & 18 & 63 &  & 61 & 79 & 38 & 60 & 35 & 93 & 16 & 11 & 71 & 41\tabularnewline
\cline{1-8} \cline{10-18} \cline{20-29}
35 & 24 & 26 & 60 & 2 & 16 & 52 & 45 &  & 7 & 13 & \multicolumn{1}{C|}{54} & \multicolumn{1}{C|}{79} & \multicolumn{1}{C|}{42} & \multicolumn{1}{C|}{32} & \multicolumn{1}{C|}{81} & 53 & 8 &  & 67 & 46 & 43 & 50 & 25 & 44 & 1 & 99 & 45 & 85\tabularnewline
\cline{1-8} \cline{10-18} \cline{20-29}
\multicolumn{1}{C}{} & \multicolumn{1}{C}{} & \multicolumn{1}{C}{} & \multicolumn{1}{C}{} & \multicolumn{1}{C}{} & \multicolumn{1}{C}{} & \multicolumn{1}{C}{} & \multicolumn{1}{C}{} &  & 76 & 27 & \multicolumn{1}{C|}{41} & \multicolumn{1}{C|}{6} & \multicolumn{1}{C|}{48} & \multicolumn{1}{C|}{23} & \multicolumn{1}{C|}{15} & 72 & 61 &  & 28 & 20 & 94 & 65 & 42 & 29 & 3 & 80 & 88 & 56\tabularnewline
\cline{10-18} \cline{20-29}
\multicolumn{1}{C}{} & \multicolumn{1}{C}{} & \multicolumn{1}{C}{} & \multicolumn{1}{C}{} & \multicolumn{1}{C}{} & \multicolumn{1}{C}{} & \multicolumn{1}{C}{} & \multicolumn{1}{C}{} & \multicolumn{1}{C}{} & \multicolumn{1}{C}{} & \multicolumn{1}{C}{} &  &  &  &  &  & \multicolumn{1}{C}{} & \multicolumn{1}{C}{} &  & 53 & 82 & 77 & 15 & 98 & 63 & 47 & 4 & 30 & 36\tabularnewline
\cline{20-29}
\multicolumn{1}{C}{} & \multicolumn{1}{C}{} & \multicolumn{1}{C}{} & \multicolumn{1}{C}{} & \multicolumn{1}{C}{} & \multicolumn{1}{C}{} & \multicolumn{1}{C}{} & \multicolumn{1}{C}{} & \multicolumn{1}{C}{} & \multicolumn{1}{C}{} & \multicolumn{1}{C}{} &  &  &  &  &  & \multicolumn{1}{C}{} & \multicolumn{1}{C}{} & \multicolumn{1}{C}{} & \multicolumn{1}{C}{} & \multicolumn{1}{C}{} & \multicolumn{1}{C}{} & \multicolumn{1}{C}{} & \multicolumn{1}{C}{} & \multicolumn{1}{C}{} & \multicolumn{1}{C}{} & \multicolumn{1}{C}{} & \multicolumn{1}{C}{} & \multicolumn{1}{C}{}\tabularnewline
\cline{1-11} \cline{18-29}
114 & 61 & 60 & 36 & 88 & 2 & 78 & 73 & 52 & 72 & 35 &  &  &  &  &  &  & 19 & 131 & 79 & 107 & 103 & 58 & 59 & 35 & 16 & 109 & 60 & 94\tabularnewline
\cline{1-11} \cline{18-29}
119 & 46 & 81 & 117 & 98 & 6 & 42 & 26 & 20 & 91 & 25 &  &  &  &  &  &  & 104 & 25 & 61 & 38 & 87 & 138 & 37 & 140 & 124 & 39 & 31 & 46\tabularnewline
\cline{1-11} \cline{18-29}
93 & 7 & 40 & 104 & 109 & 17 & 77 & 31 & 79 & 69 & 45 &  &  &  &  &  &  & 67 & 30 & 64 & 137 & 49 & 69 & 17 & 101 & 72 & 34 & 113 & 117\tabularnewline
\cline{1-11} \cline{18-29}
3 & 71 & 51 & 32 & 83 & 80 & 84 & 15 & 59 & 75 & 118 &  &  &  &  &  &  & 76 & 126 & 55 & 132 & 13 & 23 & 84 & 73 & 57 & 82 & 129 & 20\tabularnewline
\cline{1-11} \cline{18-29}
62 & 105 & 37 & 21 & 54 & 113 & 102 & 48 & 9 & 10 & 110 &  &  &  &  &  &  & 50 & 108 & 134 & 68 & 78 & 71 & 9 & 97 & 41 & 24 & 88 & 102\tabularnewline
\cline{1-11} \cline{18-29}
28 & 11 & 64 & 38 & 47 & 87 & 65 & 97 & 89 & 29 & 116 &  &  &  &  &  &  & 51 & 112 & 22 & 7 & 114 & 139 & 128 & 10 & 75 & 111 & 47 & 54\tabularnewline
\cline{1-11} \cline{18-29}
44 & 16 & 115 & 120 & 24 & 94 & 18 & 106 & 14 & 50 & 70 &  &  &  &  &  &  & 8 & 123 & 4 & 89 & 48 & 127 & 26 & 70 & 121 & 40 & 98 & 116\tabularnewline
\cline{1-11} \cline{18-29}
99 & 112 & 23 & 12 & 5 & 101 & 86 & 43 & 49 & 74 & 67 &  &  &  &  &  &  & 86 & 5 & 28 & 2 & 91 & 100 & 125 & 136 & 66 & 106 & 80 & 45\tabularnewline
\cline{1-11} \cline{18-29}
34 & 68 & 96 & 33 & 13 & 30 & 92 & 55 & 103 & 90 & 57 &  &  &  &  &  &  & 133 & 12 & 141 & 11 & 93 & 3 & 130 & 144 & 18 & 56 & 77 & 52\tabularnewline
\cline{1-11} \cline{18-29}
53 & 108 & 41 & 100 & 39 & 56 & 8 & 82 & 76 & 107 & 1 &  &  &  &  &  &  & 143 & 43 & 120 & 83 & 99 & 33 & 44 & 21 & 110 & 85 & 36 & 53\tabularnewline
\cline{1-11} \cline{18-29}
22 & 66 & 63 & 58 & 111 & 85 & 19 & 95 & 121 & 4 & 27 &  &  &  &  &  &  & 118 & 65 & 27 & 115 & 63 & 95 & 92 & 42 & 96 & 122 & 6 & 29\tabularnewline
\cline{1-11} \cline{18-29}
\multicolumn{1}{C}{} & \multicolumn{1}{C}{} & \multicolumn{1}{C}{} & \multicolumn{1}{C}{} & \multicolumn{1}{C}{} & \multicolumn{1}{C}{} & \multicolumn{1}{C}{} & \multicolumn{1}{C}{} & \multicolumn{1}{C}{} & \multicolumn{1}{C}{} & \multicolumn{1}{C}{} &  &  &  &  &  &  & 15 & 90 & 135 & 81 & 32 & 14 & 119 & 1 & 74 & 62 & 105 & 142\tabularnewline
\cline{18-29}
\end{tabular}\vspace{3pt}
\end{center}
\caption{Magic squares of orders 3 to 12 found with the integer formulation}\label{fig:magicsquares}
\end{sidewaysfigure}

\section{Conclusions}\label{sec:5}

In this article, we have shown that the Douglas--Rachford algorithm can be used as a successful heuristic for constructing magic squares, despite the nonconvexity of the problem. To this aim, we have proposed two different formulations as a feasibility problem, one integer and another one binary. Our numerical tests demonstrate that the integer formulation is more effective for finding magic squares because it requires considerably much less time on average for finding a solution. Surprisingly, this behavior was the opposite with the similar binary and integer formulations for solving Sudoku puzzles, where the integer formulation is completely unsuccessful. Interestingly, the binary was more successful than the integer formulation for orders $4$ and $5$. For higher orders, the binary formulation was not able to solve any instance, possibly due to lack of time.

\section*{Acknowledgements}
The first author was supported by MINECO of Spain and ERDF of EU, as part of the Ram\'on y Cajal
program (RYC-2013-13327) and the Grant MTM2014-59179-C2-1-P.

%
%
%
%
%
%
%

\subsection*{About the authors}%

\textbf{Francisco J. Arag\'on Artacho} earned his Ph.D. in 2007 from the University of Murcia (Spain). He subsequently held a postdoctoral position ``Juan de la Cierva'' at the University of Alicante. In 2011 he moved to the University of Newcastle (Australia), where he became a Research Associate at the Research Centre for Computer-Assisted Research Mathematics and its Applications. Two years later, he moved to Luxembourg where he became a Marie Curie Fellow (AFR Postdoc) at the Luxembourg Centre for Systems Biomedicine. In November 2014 he started his current position at the University of Alicante as a Ram\'on y Cajal fellow. His main research interests include set-valued and variational analysis, optimization, convex analysis and projection algorithms.
\vspace{0.3cm}

\setlength{\parindent}{0pt}\textbf{Paula Segura Mart\'inez} graduated from the University of Alicante with a B.S. in Mathematics in 2017. She received her M.S. in Mathematics from the University of Murcia in 2018 and now she is a Ph.D. student at the University of Valencia. Her research interests are in the fields of combinatorial optimization, location and arc routing problems.

\end{document}